\documentclass[10pt]{article}
\usepackage[utf8]{inputenc}

\usepackage{amsmath,amsfonts,amsthm,amscd,amssymb,graphicx}
\usepackage{mathabx}
\usepackage[hypertexnames=false]{hyperref}
\usepackage{cleveref}

\usepackage{cite}

\usepackage{cases}

\usepackage{authblk}

\usepackage{bm}

\usepackage{comment}

\usepackage[dvipsnames]{xcolor}

\numberwithin{equation}{section}

\newtheorem{theorem}{Theorem}[section]
\newtheorem{lemma}[theorem]{Lemma}
\newtheorem{proposition}[theorem]{Proposition}

\newtheorem{remark}[theorem]{Remark}

\newcommand{\RR}{\mathbb{R}}

\begin{document}

\title{Asymptotic Behavior of One-Dimensional Hartree Equations \\  with the Long-Range Interaction}

\author[1]{Changhun Yang}
\author[2]{Chanjin You}

\renewcommand{\Affilfont}{\small}

\affil[1]{Department of Mathematics, Chungbuk National University, Cheongju 28644, South Korea\\
	\href{mailto:chyang@chungbuk.ac.kr}{chyang@chungbuk.ac.kr}}

\affil[2]{Department of Mathematics, Yale University, 219 Prospect Street, New Haven, CT 06511, USA\\
	\href{mailto:chanjin.you@yale.edu}{chanjin.you@yale.edu}}

\renewcommand\Authands{, and }

\date{}

\maketitle

\begin{abstract}
We investigate the long-time behavior of small solutions to the one-dimensional Hartree equation with the soft Coulomb interaction. The Hartree equation arises as an effective mean-field model for the evolution of many-body quantum systems, while the soft Coulomb potential provides a regularized one-dimensional approximation of the Coulomb interaction that removes the singularity at the origin while preserving its long-range character.

Owing to the slow spatial decay of the interaction kernel, the equation exhibits long-range nonlinear effects. We prove global existence and modified scattering for sufficiently small, exponentially localized initial data. A notable feature is that, in contrast to the logarithmic phase correction arising in the well-known three-dimensional Coulomb Hartree equation, the one-dimensional soft Coulomb interaction produces a leading phase correction of order \((\log t)^2\), followed by a lower-order logarithmic correction.

The proof is carried out in analytic function spaces. The exponential localization of the initial data is converted into analyticity of the transformed profile, and a method based on  a generator function with a time-dependent radius of analyticity is used to compensate for the derivative loss arising from the nonlinear phase equation. 
\end{abstract}

\section{Introduction}
We consider the following one-dimensional Hartree equation:
\begin{equation}
    \label{eqn:Hartree}
    \begin{cases}
        \begin{aligned}
            &i\partial_t u = - \frac{1}{2}\partial_x^2 u+ \mu \big(w \ast |u|^2\big)u , \\
            &u\vert_{t=0} = u_0.
        \end{aligned}
    \end{cases}
\end{equation}
Here, $u=u(t,x):\mathbb R\times\mathbb R\to\mathbb C$,
$\ast$ denotes the convolution on $\mathbb R$, and the interaction kernel
\[
w(x):=\frac{1}{\sqrt{1+|x|^2}},
\qquad \mu\in\mathbb R\setminus\{0\},
\]
is often referred to as a \textit{soft} Coulomb potential.

\medskip

The soft Coulomb potential regularizes the Coulomb singularity while
retaining the long-range $1/|x|$ behavior. It has long been used in
one-dimensional models of atoms exposed to intense laser fields
\cite{Javanainen1988,Eberly:89}, and subsequently in
one-dimensional electronic-structure and density-functional studies \cite{WagnerPCCP2012,WagnerPRL2012,WagnerPhysRevB2015}. 
It has also been employed in time-dependent density-functional studies of strong-field electron dynamics \cite{PhysRevLett.Thiele} and in quantum-dot nanostructure models \cite{Abdullah2009,Coe_2011}. See also
the references therein. Motivated by these quantum models, we study the corresponding one-dimensional scalar Hartree equation as a natural mean-field model for the many-body dynamics of particles interacting through the soft Coulomb
potential.

\medskip

From an analytical point of view, the regularization of the Coulomb singularity simplifies the Cauchy theory. Indeed, the soft Coulomb potential $w$ belongs to $W^{k,\infty}(\mathbb R)$ for every $k\geq 0$, i.e., $w$ and all of its derivatives are bounded. The averaging effect of the convolution then gives
\[
  \bigl\|\partial_x^k(w*|u|^2)\bigr\|_{L^\infty}
  \lesssim \|\partial_x^kw\|_{L^\infty}\|u\|_{L^2}^2.
\]
Consequently, for every $s\geq 0$,
\[
  \bigl\|(w*|u|^2)u\bigr\|_{H^s}
  \lesssim \|u\|_{L^2}^2\|u\|_{H^s},
\]
and a similar difference estimate shows that the Hartree nonlinearity is locally Lipschitz on $H^s(\mathbb R)$.
A standard fixed-point argument therefore yields local well-posedness in $H^s(\mathbb R)$.
Moreover, combined with the conservation of mass
\[
    \|u(t)\|_{L^2}=\|u_0\|_{L^2},
\]
the above nonlinear estimate yields an a priori bound for the
$H^s$ norm on every finite time interval, thereby giving global
well-posedness in $H^s(\mathbb R)$ for every $s\geq0$.
Thus, the Cauchy theory in Sobolev spaces is relatively straightforward, while the main analytical challenge stems from the long-range $1/|x|$ tail of the soft Coulomb interaction and its effect on the asymptotic dynamics.

\subsection{Scattering and the long-range threshold}

In this paper, we study the long-time behavior of small solutions to \eqref{eqn:Hartree}, with particular emphasis on scattering and precise asymptotic expansions. To explain the long-range threshold, we first consider the Hartree equation
on $\mathbb R^d$ with a general interaction potential:
\begin{equation}
\label{eqn:general-Hartree}
 i\partial_t u
 =
 \left(
 -\frac12\Delta+W\ast |u|^2
 \right)u,
 \qquad
 (t,x)\in\mathbb R\times\mathbb R^d,
\end{equation}
where $W:\mathbb{R}^d\rightarrow \mathbb{R}$. A typical example is the homogeneous potential
\begin{equation}\label{homog.potential}
 W(x)=\kappa |x|^{-\gamma},
 \qquad 0<\gamma<d, \quad \kappa\in\mathbb{R}\setminus\{0\}.
\end{equation}
A particularly important physical example is the Coulomb interaction in three dimensions, corresponding to $\gamma=1$.

By linear scattering in $H^s(\mathbb{R}^d)$, we mean that the nonlinear solution asymptotically behaves like a linear solution: for some $u_{+,\text{LS}}\in H^s(\mathbb{R}^d)$,
\begin{equation}\label{def:LS}
 \| u(t) - e^{it\Delta/2} u_{+,\text{LS}}\|_{H^s(\mathbb{R}^d)} \rightarrow 0 \; \text{ as } \;t\rightarrow \infty,
\end{equation}
where $e^{it\Delta/2}$ denotes the free Schrödinger propagator. Introduce the linear profile $f(t):=e^{-i\frac{t}{2}\Delta} u(t)$. With the Fourier transform convention fixed in the preliminaries, the factorization of the free Schr\"odinger group suggests, for sufficiently regular and localized profiles, the asymptotic formula 
\begin{equation}\label{eqn:free-schrodinger-asymptotics}
 u(t,x)\approx (2\pi it)^{-d/2}e^{i|x|^2/(2t)}
 \widehat f\bigl(t,x/t\bigr),
 \qquad t\to\infty.
\end{equation}
Thus, free solutions spread over the spatial scale $x=t\xi$ with $\xi$ fixed and exhibit the dispersive decay $t^{-d/2}$. By the
unitarity of $e^{it\Delta/2}$ on $H^s$, \eqref{def:LS} is equivalent to
$f(t)\to u_{+,\mathrm{LS}}$ in $H^s$, or, equivalently, to the convergence
of $\langle\xi\rangle^s\widehat f(t,\xi)$ in $L^2_\xi$. By Duhamel's formula, the scattering state is given by
\[
 u_{+,\mathrm{LS}}
 =\lim_{t\rightarrow\infty}f(t)=u_0-i\int_0^\infty e^{-i\tau\Delta/2}
          \bigl(W\ast|u(\tau)|^2\bigr)u(\tau)\,d\tau,
\]
whenever the improper integral converges in $H^s$.
This is the
initial-value scattering problem considered in this paper. Conversely,
the \emph{final-state problem} prescribes $\phi_+\in H^s(\mathbb R^d)$
and asks for a solution satisfying \eqref{def:LS} with
$u_{+,\mathrm{LS}}=\phi_+$. When this solution is uniquely determined,
the map from $\phi_+$ to its initial datum is the forward \emph{wave
operator}; a construction only for sufficiently large times is often
called a local wave operator at infinity. Although our main result concerns
the initial-value problem, we include related wave-operator results in the
literature review below.

\medskip

Differentiating the Fourier profile and extracting the leading term
suggested by \eqref{eqn:free-schrodinger-asymptotics}, we obtain
\begin{equation}\label{eqn:profile-heuristic}
 i\partial_t\widehat f(t,\xi)
 =
 \frac{1}{(2\pi)^d}
 \bigl(W(t\cdot)\ast|\widehat f(t)|^2\bigr)(\xi)
 \widehat f(t,\xi)+R_f(t,\xi),
\end{equation}
where $W(t\cdot)(z):=W(tz)$. The remainder $R_f$ records the difference
between this resonant term and the exact profile equation.\footnote{
The exact expression is $R_f=(2\pi)^{-d}\bigl\{
\mathcal F M(-t)\mathcal F^{-1}
[(W(t\cdot)\ast|\widetilde f|^2)\widetilde f]
-(W(t\cdot)\ast|\widehat f|^2)\widehat f
\bigr\}$, where
$M(t)f(x)=e^{i|x|^2/(2t)}f(x)$ and
$\widetilde f(t,\xi)=(2\pi it)^{d/2}e^{-it|\xi|^2/2}u(t,t\xi)$. }
The rescaling $W(t\cdot)$ comes directly from the spatial scale $x=t\xi$:
writing $y=t\eta$ in the Hartree convolution gives
\[
 (W\ast|u|^2)(t,t\xi)
 \approx
 \frac{1}{(2\pi)^d}
 \int_{\mathbb R^d}W\bigl(t(\xi-\eta)\bigr)
 |\widehat f(t,\eta)|^2\,d\eta.
\]
Here the Jacobian $t^d$ from $y=t\eta$ cancels the factor $t^{-d}$
from $|u(t,t\eta)|^2$. Away from the diagonal $\xi=\eta$, the rescaled
kernel therefore probes the tail of $W$.

\medskip

For the homogeneous potential \eqref{homog.potential}, this becomes
\[
 \frac{1}{(2\pi)^d}
 \bigl(W(t\cdot)\ast|\widehat f(t)|^2\bigr)(\xi)
 =
 \frac{\kappa t^{-\gamma}}{(2\pi)^d}
 \int_{\mathbb R^d}|\xi-\eta|^{-\gamma}
 |\widehat f(t,\eta)|^2\,d\eta.
\]
The kernel $|\xi-\eta|^{-\gamma}$ is locally integrable since
$0<\gamma<d$. If the integral has a nonzero, finite limiting
coefficient $Q_\infty(\xi)$ in a suitable function space, the leading
profile equation takes the form
\[
 i\partial_t\widehat f(t,\xi)
 \approx t^{-\gamma}Q_\infty(\xi)\widehat f(t,\xi).
\]
The relevant distinction is whether the leading time factor
$t^{-\gamma}$ is integrable in time. The regime $\gamma>1$ is
therefore called \emph{short-range}: the leading nonlinear forcing
has a finite time integral, so linear scattering is expected under
suitable assumptions. 
For $1<\gamma<\min\{4,d\}$, both small-data
scattering for the initial-value problem and the existence of wave operators in weighted spaces were proved in \cite{AIHPA1987}. For $\gamma=2$ in dimensions $d>2$, a small-data scattering operator was constructed in \cite{Mochizuki1989OnSD}. For $2<\gamma<d$, small-data
scattering in $H^s(\mathbb R^d)$ for $s\ge\gamma/2-1$ was proved in \cite{miao2008}, assuming smallness in the critical homogeneous
Sobolev norm $\dot H^{\gamma/2-1}$.

On the other hand, $0<\gamma\le1$ is the \emph{long-range} regime:
$t^{-\gamma}$ is not integrable, and one expects the nonlinearity
to leave a persistent phase rather than ordinary linear
scattering. The borderline $\gamma=1$ is the
\emph{scattering-critical} case. For repulsive homogeneous
interactions in dimensions $d\ge2$, nonexistence of nontrivial
linear scattering states in the long-range regime was proved in
\cite{AIHPA1987}; the three-dimensional Coulomb case had already
been studied in \cite{Glassey1997}.

\medskip

By modified scattering, we mean that a real-valued phase
$\Phi(t,\xi)$ can be chosen so that, for some
$u_{+,\mathrm{MS}}\in H^s(\mathbb R^d)$,
\[
 \bigl\|u(t)-e^{-i\Phi(t,-i\nabla)}
                 e^{it\Delta/2}u_{+,\mathrm{MS}}\bigr\|_{H^s(\mathbb R^d)}
 \longrightarrow0
 \qquad\text{as }t\to\infty.
\]
Equivalently, the phase-corrected linear profile $ e^{i\Phi(t,-i\nabla)}f(t)$ converges to
$u_{+,\mathrm{MS}}$ in $H^s(\mathbb R^d)$. The phase generally depends on the solution or its asymptotic profile through nonlinear quantities, and is therefore called
a nonlinear phase correction.

In the homogeneous model \eqref{homog.potential}, the first approximation to the phase is
\[
 \Phi(t,\xi)
 \approx Q_\infty(\xi)\int_1^t \tau^{-\gamma}\,d\tau, \quad 0<\gamma \le 1.
\]
At the scattering-critical exponent $\gamma=1$, the integral is
$\log t$. Modified scattering for small solutions in the critical
Hartree setting was established in
\cite{HNO1998,Hayashi1998a}; modified wave operators, which solve
the corresponding final-state problem with a logarithmic
correction, were constructed in \cite{GO1993}. See also
\cite{Kato2011} for a different approach to critical long-range
scattering.

For $0<\gamma<1$, the leading phase grows like
$(t^{1-\gamma}-1)/(1-\gamma)$. This captures the principal
long-range effect, but for $0<\gamma\le\frac12$ the leading
correction alone leaves a nonintegrable term in the phase equation,
so further higher-order corrections to the asymptotic dynamics are needed.
Small-data asymptotics in dimensions $d\ge2$ under weighted-data
assumptions were studied in \cite{HayashiNaumkin1998,HMJ2001}. On the final-state side, modified
wave operators for $\frac12<\gamma<1$ in dimensions $d\ge3$ were constructed in \cite{GV2000-1}. By incorporating successive higher-order
corrections to both the asymptotic phase and amplitude, they
extended this construction to $0<\gamma\le1$ in dimensions
$d\ge3$ in \cite{GV2000-2}. They subsequently treated low
dimensions using Gevrey spaces in \cite{GV200103}, constructing
local modified wave operators at infinity.

\medskip 

It is worth emphasizing the role of the tail of the potential before
turning to the one-dimensional case. Suppose that $d\ge2$,
$W\in L^1_{\mathrm{loc}}(\mathbb R^d)$, and
\[
    W(x)=A|x|^{-1}+o(|x|^{-1})
    \qquad\text{as }|x|\to\infty
\]
for some $A\neq0$. For
$q\in L^1(\mathbb R^d)\cap L^\infty(\mathbb R^d)$, decompose
\[
    W(t\cdot)\ast q
    =I_{\mathrm{near}}+I_{\mathrm{far}}
\]
according to $|\xi-\eta|\le t^{-1}$ and
$|\xi-\eta|>t^{-1}$. A change of variables gives
\[
    I_{\mathrm{near}}(t,\xi)
    =
    t^{-d}\int_{|z|\le1}
    W(z)q(\xi-z/t)\,dz =
    O\left(
    t^{-d}\|q\|_{L^\infty}
    \|W\|_{L^1(|z|\le1)}
    \right),
\]
whereas the behavior of $W$ at infinity gives
\[
    I_{\mathrm{far}}(t,\xi)
    =
    \frac{A}{t}
    \int_{\mathbb R^d}
    \frac{q(\eta)}{|\xi-\eta|}\,d\eta
    +o(t^{-1})
\]
for each fixed $\xi$. Since $|\cdot|^{-1}$ is locally integrable in
$\mathbb R^d$ for $d\ge2$, the tail of $W$ determines the leading
$t^{-1}$ behavior, while its local behavior contributes only to
lower-order terms. In our application, $q=|\widehat f|^2$, so the
leading coefficient is nonzero for a nontrivial profile.

The situation changes when $d=1$. Although the soft
Coulomb potential itself is bounded at the origin,
\[
    tw(tz)\longrightarrow |z|^{-1}
    \qquad (z\neq0),
\]
and the limiting kernel is not locally integrable at $z=0$. Its
time-dependent regularization produces the additional
logarithmic factor in the asymptotic expansion. We explain this in the next subsection.

\subsection{The scattering-critical soft Coulomb interaction in one dimension}
We now turn to dimension $d=1$. The homogeneous potential
$|x|^{-\gamma}$ is locally integrable near the origin for
$0<\gamma<1$, whereas the formal endpoint $|x|^{-1}$ is not.
In \cite{HMJ1998}, modified scattering for
$\frac12<\gamma<1$ was established using analytic function
spaces to control the derivative loss produced by the
phase--amplitude transformation. For $0<\gamma\le\frac12$,
the same work proved global existence, sharp decay, and an
asymptotic formula. This derivative-loss issue is particularly
delicate in one dimension, where the homogeneous potential
provides weaker high-frequency smoothing. Indeed, for
$0<\gamma<d$,
$\widehat{|x|^{-\gamma}}(\xi)
=c_{d,\gamma}|\xi|^{\gamma-d}$,
so the Hartree convolution gains $d-\gamma$ derivatives at
high frequencies: only $1-\gamma<1$ in one dimension, compared
with more than one derivative in dimensions $d\ge2$ when
$0<\gamma<1$. Related analytic and Gevrey results include
\cite{ASNSP1998,HMJ2001,GV200103}, the last of which concerns
modified wave operators.

\medskip

Related one-dimensional long-range behavior occurs for the cubic nonlinear Schr\"odinger equation, corresponding formally to \eqref{eqn:general-Hartree} with $W=\delta_0$, the Dirac
delta distribution. The cubic equation is also scattering-critical, and a nonexistence result for ordinary linear scattering in the cubic case was established in \cite{Barab1984}. We refer to the results on modified scattering of small solutions in a weighted space \cite{Ozawa1991,Hayashi1998a,Kato2011,Ifrim2015}; see also the survey \cite{Murphy2021}.
By comparison, the power nonlinearity $|u|^{p-1}u$ with $p>3$ is short-range in one dimension. We refer to \cite{tsutsumi1984,Nakamura2002,NO2002,BGTV2023} for linear scattering results in that regime and related settings.

\medskip 

The soft Coulomb potential in \eqref{eqn:Hartree} regularizes the
nonintegrable singularity at the origin while retaining the
$|x|^{-1}$ tail. Unlike the homogeneous kernels
$|x|^{-\gamma}$, whose rescaling produces the exact factor
$t^{-\gamma}$, the rescaled soft Coulomb convolution has an additional logarithmic factor at the endpoint $\gamma=1$.
Set $W_t(z):=\langle tz\rangle^{-1}$. More precisely, as shown in
Lemma~\ref{lem:decomposition-potential}, for
$q\in L^1(\mathbb R)\cap W^{1,\infty}(\mathbb R)$, we have
\begin{equation}\label{eqn:intro-soft-coulomb-expansion}
    (W_t\ast q)(\xi)
    =
    \frac{2\log t}{t}q(\xi)
    +\frac1tF[q](\xi)
    +O_{L^\infty}(t^{-2}),
\end{equation}
where
\begin{equation}\label{eqn:intro-F}
\begin{aligned}
    F[q](\xi)
    &:=
    2\log 2\,q(\xi)
    +\int_{|z|\le1}
      \frac{q(\xi-z)-q(\xi)}{|z|}\,dz 
    +\int_{|z|>1}
      \frac{q(\xi-z)}{|z|}\,dz.
\end{aligned}
\end{equation}
Indeed, integrating the rescaled kernel over $|z|\le1$ gives
\[
    \int_{|z|\le1}\frac{dz}{\sqrt{1+t^2z^2}}
    =
    \frac{2}{t}\operatorname{arsinh}(t)
    =
    \frac{2\log t+2\log 2}{t}+O(t^{-3}).
\]
The regularized core $|z|\le t^{-1}$ contributes only
$O(t^{-1})$, whereas the leading logarithm comes from the
intermediate range $t^{-1}\lesssim|z|\lesssim1$, corresponding to
physical separations $1\lesssim|x-y|\lesssim t$. Thus, the $|x|^{-1}$ tail determines the leading
$t^{-1}\log t$ term, while the lower-order operator $F$ collects
the remaining $t^{-1}$ contributions, including those arising from
the soft-core regularization and from the off-diagonal region
$|z|>1$.

Consequently, formally replacing $q(t,\xi)$ by its asymptotic limit $q_\infty(\xi)$ and integrating the leading term in
\eqref{eqn:intro-soft-coulomb-expansion} in time, we obtain
\[
    \frac{\mu}{2\pi}
    \int_1^t\frac{2\log\tau}{\tau}\,d\tau\,q_\infty(\xi)
    =
    \frac{\mu}{2\pi}(\log t)^2q_\infty(\xi).
\]
This suggests a leading $(\log t)^2$ phase correction.
The $t^{-1}F[q]$ term gives a lower-order $\log t$ correction.
Thus the one-dimensional soft Coulomb interaction differs from
the homogeneous scattering-critical case in dimensions $d\ge2$,
where the leading phase is logarithmic.

\medskip

This feature does not follow by simply taking $\gamma \uparrow 1$ in the one-dimensional homogeneous family. For
$0<\gamma<1$, the leading phase has the polynomial time factor
\[
 \int_1^t\tau^{-\gamma}\,d\tau
 =\frac{t^{1-\gamma}-1}{1-\gamma}
 \longrightarrow \log t
 \qquad\text{as }\gamma\uparrow1
 \quad\text{for each fixed }t.
\]
Taken alone, this limit suggests a logarithmic phase, not a
$(\log t)^2$ phase. However, the coefficient of that time factor is itself singular at the one-dimensional endpoint. For a sufficiently regular density $q$,
\[
 \int_{|z|\le1}|z|^{-\gamma}q(\xi-z)\,dz
 =
 \frac{2}{1-\gamma}q(\xi)+O(1)
 \qquad\text{as }\gamma\uparrow1.
\]
Thus the limit of the time integral alone does not give a limit
for the homogeneous phase, since its spatial coefficient diverges
as $\gamma\uparrow1$. For the soft Coulomb kernel, the singularity
is instead cut off at $|z|\sim t^{-1}$. As shown in
\eqref{eqn:intro-soft-coulomb-expansion}, the leading term of
the rescaled convolution is $\frac{2 \log t }{t}q(\xi)$. Integrating
this term in time gives the leading $(\log t)^2$ phase.

\subsection{Analytic framework and derivative loss}
To make the preceding observation rigorous, we introduce the
following pseudoconformal change of variables, suggested by the
free Schr\"odinger asymptotics
\eqref{eqn:free-schrodinger-asymptotics}:
\[
    v=\frac{x}{t},
    \qquad
    b(t,v):=(2\pi it)^{1/2}e^{-itv^2/2}u(t,tv).
\]
Here, $v=x/t$ is the velocity variable parametrizing the asymptotic rays $x=tv$. This normalization is
used throughout the paper.
Rewriting \eqref{eqn:Hartree} in terms of $b$, we obtain 
\begin{equation}\label{eqn:intro-b}
 i\partial_t b+\frac{1}{2t^2}\partial_v^2b
 =
 \frac{\mu}{2\pi}(W_t\ast_v|b|^2)b,
 \qquad W_t(v)=\langle tv\rangle^{-1}.
\end{equation}
The relation
$\|u(t)\|_{L^\infty_x}
\lesssim t^{-1/2}\|b(t)\|_{L^\infty_v}$
shows that a uniform $H^1_v$ bound on $b$ would yield the sharp
$t^{-1/2}$ decay. Moreover,
$\|\partial_vb(t)\|_{L^2_v}=(2\pi)^{1/2}\|J(t)u(t)\|_{L^2_x}$
for the Galilean vector field $J(t)=x+it\partial_x$.

The standard weighted energy estimate does not by itself provide such a uniform bound. Differentiating \eqref{eqn:intro-b} and using
the fact that $W_t\ast|b|^2$ is real, we obtain
\[
 \frac12\frac{d}{dt}\|\partial_vb(t)\|_{L^2}^2
 =
 \frac{\mu}{2\pi}\operatorname{Im}
 \int_{\mathbb R}
 (W_t\ast\partial_v|b|^2)(v)\,
 b(v)\overline{\partial_vb(v)}\,dv.
\]
The near and far parts of the kernel satisfy
\[
 \|W_t\mathbf1_{|v|\le1}\|_{L^1_v}
 =2t^{-1}\operatorname{arsinh}(t)
 \lesssim t^{-1}\log(2+t),
 \qquad
 \|W_t\mathbf1_{|v|>1}\|_{L^\infty_v}\lesssim t^{-1}.
\]
Together with
\[
 \|\partial_v|b|^2\|_{L^2}
 \lesssim\|b\|_{L^\infty}\|\partial_vb\|_{L^2},
 \qquad
 \|\partial_v|b|^2\|_{L^1}
 \lesssim\|b\|_{L^2}\|\partial_vb\|_{L^2},
\]
Young's inequality yields
\begin{equation}\label{eqn:intro-energy-obstruction}
 \frac{d}{dt}\|\partial_vb(t)\|_{L^2}
 \lesssim
 \frac{\log(2+t)\|b(t)\|_{L^\infty}^2
       +\|b(t)\|_{L^2}^2}{t}
 \|\partial_vb(t)\|_{L^2}.
\end{equation}
Under the desired small-data bootstrap bound
$\|b(t)\|_{L^2\cap L^\infty}\lesssim\epsilon$,
Gronwall's inequality gives only
\[
 \|\partial_vb(t)\|_{L^2}
 \lesssim \|\partial_vb(2)\|_{L^2}
 \exp\!\bigl(C\epsilon^2(\log t)^2\bigr),
 \qquad t\ge2,
\]
which is too weak to close a uniform $H^1_v$ bootstrap. In
fact, this upper bound grows faster than any fixed power of $t$, so it cannot be accommodated by a bootstrap allowing a fixed polynomial loss.

An analogous obstruction to uniform Sobolev bounds arises for the one-dimensional homogeneous potentials $|x|^{-\gamma}$,
$0<\gamma<1$. In the corresponding long-range analyses, one first extracts the leading phase. After this phase correction, however,
the resulting coupled system loses one half derivative in the standard Sobolev energy estimate; see Remark~\ref{rem:derivativeloss}. Analytic and Gevrey spaces were used to compensate for this loss in \cite{HMJ1998,HMJ2001,GV200103}.

Following this strategy, we work with exponentially localized initial data whose pseudoconformal profile is analytic in the
velocity variable. In the analytic energy estimates, a slowly decreasing radius of analyticity compensates for the derivative
loss while remaining uniformly positive. The function
spaces and estimates are given in
Sections~\ref{sec:Analytic Function Space} and~\ref{sec:bootstrap}.

\subsection{Main result}

We now state the main theorem.

\begin{theorem}\label{thm:main}
	Let $\sigma>0$, $\mu \in \mathbb{R} \setminus \{0 \}$, and $\delta >0$. There exists $\epsilon_0 = \epsilon_0(\sigma, \mu ,\delta) >0$ such that if 
    \[
        \varepsilon:=\| e^{\sigma|x|} u_0 \|_{L^2_x} \le \epsilon_0,
    \]
then there exists a unique global-in-time solution $u \in C([0,\infty);L^2(\mathbb{R}))$ satisfying
\begin{equation}\label{est:optimaldecay}
    \| u(t) \|_{L^p_x} \lesssim \varepsilon t^{-1/2+1/p}
\end{equation}
for $2 \le p \le \infty$ and $t \ge 1$. In addition, for some constant $C>0$, it also satisfies
\begin{equation}\label{est:phasemixing}
	\left\| \partial_x^m |u(t)|^2 \right\|_{L^p_x}
	\lesssim \varepsilon^{2} m! \left(\frac{C} \sigma \right)^m t^{-m-1+1/p}
\end{equation}
for every $t\ge1$, $1 \le p \le \infty$, and integer $m \ge 0$.
Moreover, there exists an asymptotic state
$u_\infty\in L^2(\mathbb R)$ such that $\widehat{u_\infty}
    \in \bigcap_{N\ge0}H^N(\mathbb R)$
and 
\begin{equation}\label{est:modifiedscattering}
    \left\| u(t) - e^{it\partial_x^2/2 -i (\log t)^2 A_{\infty}(-i\partial_x) - i (\log t) B_{\infty}(-i\partial_x)} u_{\infty}\right\|_{L^p_x}
    \lesssim \varepsilon\,t^{-3/2+1/p}\log^{5+4\delta}(1+t),
\end{equation}
for $t\ge 1$ and $2 \le p \le \infty$.
Here the real-valued functions $A_\infty$ and $B_\infty$ are defined by
\[
 A_\infty(\xi):=\frac{\mu}{2\pi}|\widehat{u_\infty}(\xi)|^2,
 \qquad
 B_\infty(\xi):=\frac{\mu}{2\pi}
 F[|\widehat{u_\infty}|^2](\xi),
\]
where $F$ is given by \eqref{eqn:intro-F}.
Equivalently, the following asymptotic formula holds:
\begin{equation}\label{eqn:asymptoticformula}
    u(t,x) = (2\pi it)^{-1/2} e^{ix^2/(2t)} e^{-i(\log t)^2 A_{\infty}(x/t)-i(\log t) B_{\infty}(x/t)} \widehat{u_\infty}(x/t) + O_{L^p_x}\big(\varepsilon t^{-3/2+1/p}\log^{5+4\delta}(1+t)\big).
\end{equation}
This holds for $2\le p\le\infty$ as $t\to\infty$.
\end{theorem}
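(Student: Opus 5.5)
The plan is to work entirely with the pseudoconformal profile $b(t,v)=(2\pi it)^{1/2}e^{-itv^2/2}u(t,tv)$ satisfying \eqref{eqn:intro-b}. The first step is to convert the hypothesis $\|e^{\sigma|x|}u_0\|_{L^2}\le\varepsilon$ into analyticity. By the factorization $u(t)=M(t)D(t)\mathcal F M(t)f(t)$, $b(t)$ is essentially $\mathcal F[M(t)f(t)]$. Exponential decay of $f$ gives a holomorphic extension of $\widehat f$ to the strip $|\operatorname{Im}\xi|<\sigma$, with $e^{\sigma|\cdot|}$-weighted $L^2$ control becoming an analytic norm $\sum_m \frac{\rho^m}{m!}\|\partial_v^m b\|_{L^2}$ for $\rho<\sigma$. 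The factor $M(t)$ costs only $O(1/t)$ at time $t\ge1$. So at $t=1$ (after using local theory on $[0,1]$) I will have $\|b(1)\|_{\mathcal A_{\rho_0}}\lesssim\varepsilon$ with $\rho_0=\sigma/2$, say.

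Next, I remove the leading phase to avoid the Gronwall obstruction \eqref{eqn:intro-energy-obstruction}. Define
\[
\Psi(t,v)=\frac{\mu}{2\pi}\int_1^t (W_\tau\ast|b(\tau)|^2)(v)\,d\tau ,\qquad c=e^{i\Psi}b,
\]
so that $i\partial_t c+\frac{1}{2t^2}e^{i\Psi}\partial_v^2(e^{-i\Psi}c)=0$. The nonlinear term is gone, but $\partial_v\Psi$ of size $\varepsilon^2(\log t)^2$ enters the dispersive term, and this is the derivative loss (Remark~\ref{rem:derivativeloss}). I will run a bootstrap on $[1,T]$ with a generator function and a decreasing radius
\[
\rho(t)=\rho_0-\kappa\int_1^t\frac{\log^{2+2\delta}\tau}{\tau^2}\,d\tau ,
\]
which stays $\ge\rho_0/2$ for $\kappa$ small. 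The norm is the analytic norm $\|c\|_{\mathcal A_{\rho(t)}}$, together with the analytic norm of $|b|^2$, which is gauge invariant and evolves by $\partial_t|b|^2=-t^{-2}\partial_v\operatorname{Im}(\bar b\partial_vb)$. Differentiating the generator norm in time produces a good term $\dot\rho\sum_m\frac{m\rho^{m-1}}{m!}\|\partial^m c\|^2$, which absorbs the loss term. That loss term is roughly $t^{-2}\|\partial\Psi\|\,\|c\|_{m+1/2}^2$, since $\|\partial_v\Psi\|_{\mathcal A}\lesssim\varepsilon^2\log^2t$ by Lemma~\ref{lem:decomposition-potential}. The pairing $t^{-2}\log^2 t$ against $|\dot\rho|\sim t^{-2}\log^{2+2\delta}t$ closes for small $\varepsilon$. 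Meanwhile $\|\partial_t|b|^2\|_{\mathcal A}\lesssim t^{-2}\varepsilon^2$, so $|b|^2$ converges at rate $t^{-1}$. I expect this analytic energy estimate, in particular bounding $\partial_v^m$ of the product $e^{i\Psi}$ in Cauchy-product form and making the $\delta$-logs balance, to be the main obstacle.

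With uniform analytic bounds, $\|b\|_{L^\infty}\lesssim\varepsilon$ gives \eqref{est:optimaldecay} by interpolation with mass conservation. Since $|u(t,x)|^2=(2\pi t)^{-1}|b(t,x/t)|^2$, Cauchy estimates on the strip give $\|\partial_v^m|b|^2\|_{L^p}\lesssim\varepsilon^2m!(C/\sigma)^m$, and rescaling yields \eqref{est:phasemixing}.

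For the asymptotics, set $q=|b|^2\to q_\infty$ with error $O(t^{-1})$. Apply \eqref{eqn:intro-soft-coulomb-expansion} to write
\[
\partial_t\Psi=\frac{\mu}{2\pi}\Big(\frac{2\log t}{t}q_\infty+\frac1tF[q_\infty]\Big)+O\big(t^{-2}\log t\big),
\]
which integrates to $\Psi=(\log t)^2A_\infty+(\log t)B_\infty+\Psi_0+O(t^{-1}\log t)$. Similarly $\partial_t c=O(t^{-2}\log^{4+4\delta}t)$ in $\mathcal A$, so $c\to c_\infty$ with rate $t^{-1}\log^{4+4\delta}t$. Set $\widehat{u_\infty}=e^{-i\Psi_0}c_\infty$; analyticity gives $\widehat{u_\infty}\in\bigcap_N H^N$. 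Finally, going back from $b$ to $u$ costs a factor $t^{-1/2+1/p}$ and an $O(t^{-1})$ error from $M(t)$. This gives \eqref{eqn:asymptoticformula} with the stated $\log^{5+4\delta}$ loss. Rewriting the formula via the factorization of $e^{it\partial_x^2/2}$ gives \eqref{est:modifiedscattering}, and uniqueness follows from the $L^2$ theory.
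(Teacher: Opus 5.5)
Your gauge $c=e^{i\Psi}b$ is exactly the paper's $a=e^{i\Phi}b$, but the bootstrap as you set it up does not close. The weak point is the assertion $\|\partial_v\Psi\|_{\mathcal A}\lesssim\varepsilon^2\log^2t$ ``by Lemma~\ref{lem:decomposition-potential}''. That lemma is an $L^\infty$ expansion that already costs a derivative of $q$, and it says nothing about analytic norms at radius $\rho(t)$.

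What the $c$-energy estimate actually needs is $\phi=\partial_v\Psi$ in the analytic norm at half a derivative above the level $s$ of $c$. This comes from the term $\frac{i}{2t^2}(\partial_v^2\Psi)c$ when the phase carries the high frequency; compare the $\|h\|_{z,s+1/2}$ factor in Proposition~\ref{prop:energy-a}. But $\phi$ is produced by $\partial_t\phi=\frac{\mu}{2\pi}\partial_v(W_t\ast|c|^2)$. Since $|\eta|\,|\widehat W_t(\eta)|=\frac{2|\eta|}{t}K_0(|\eta|/t)$ is only bounded by $t^{-1}\log(1+t)\langle\eta\rangle$, this equation loses a full derivative of $|c|^2$, with the non-integrable coefficient $\log t/t$.

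None of the available routes supplies this derivative. Integrating the equation in time would require $|c|^2$ at level $s+3/2$. Trading radius for derivatives fails because $\rho(\tau)-\rho(t)\to0$ as $\tau\to t$. Your auxiliary unknown $|b|^2$ does not help either: $\partial_t|b|^2=-t^{-2}\partial_v[\operatorname{Im}(\bar c\partial_vc)-\phi|c|^2]$ costs two further derivatives of $c$ (and is $O(t^{-2}\log^2t)$, not $O(t^{-2})$). So the derivative loss that must be absorbed sits in the phase equation. A radius shrinking at the rate $t^{-2}\log^{2+2\delta}t$, calibrated only to the $c$-equation, cannot absorb it.

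The missing idea is to put the phase derivative into the generator function as an independent unknown, normalized by its expected growth: $h=\Theta(t)^{-1-\delta}\phi$ with $\Theta(t)=\log^2(1+t)$, and $G(t,z)=\|a\|_{z,s}^2+\|h\|_{z,s}^2$. The $h$-equation then loses half a derivative on $|a|^2$ and on $h$ with coefficient $\frac{\log(1+t)}{t}\Theta^{-1-\delta}=\frac{1}{t\log^{1+2\delta}(1+t)}=\beta_1(t)$. This is integrable precisely because $\delta>0$, which is the real role of $\delta$. Meanwhile the coefficients in the $a$-equation become $t^{-2}\Theta^{1+\delta}$ and $t^{-2}\Theta^{2+2\delta}$. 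Both half-derivative losses are absorbed by $\partial_zG$ with $\lambda(t)=\lambda_1-K\epsilon\int_1^t\beta_1$. This radius shrinks much faster than your $\rho(t)$ but still stays above $\lambda_1/2$.

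This scheme only yields $\|\phi\|\lesssim\epsilon\,\Theta^{1+\delta}$, not $\varepsilon^2\log^2t$. That is where the rate $t^{-1}\log^{4+4\delta}t$ for $a(t)\to a_\infty$ comes from; your stated rate does not follow from your own claimed bound on $\partial_v\Psi$. Once the uniform analytic bound is available, your remaining steps match the paper: decay, phase mixing, the two-term phase expansion via Lemma~\ref{lem:decomposition-potential}, and $\widehat{u_\infty}=e^{-i\Psi_0}c_\infty$. The final $\log^{5+4\delta}$ factor comes from $\frac{\log t}{t}(|a|^2-|a_\infty|^2)$ in $\partial_t\Psi$, and from the $H^{5/2+}$ norm of $e^{-i\widetilde\Phi}\widetilde a_\infty$ in the $(I-M(t))$ term.
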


\begin{remark}
\begin{enumerate} 
\item The time-decay rate in \eqref{est:optimaldecay} is shown to be
optimal in the sense that it agrees with that of the free
Schr\"odinger flow.
\item Modified scattering is established in $L^p$ for every $2\le p\le\infty$, with a convergence rate that improves as $p$
increases.
\item Estimate \eqref{est:phasemixing} may be viewed as a phase-mixing bound, in the sense that each spatial derivative of the density gains an additional decay factor of $t^{-1}$.
\item Theorem~\ref{thm:main} holds for every analytic radius
$\sigma>0$ and thus covers the full analytic range. Analyticity
itself, however, may not be optimal for the modified scattering
result. The result might be extended to Gevrey spaces with Fourier
weights
\[
    e^{\sigma | D|^\alpha},
    \qquad 0<\alpha<1.
\]
Such extensions have been established for related long-range Hartree
equations in \cite{ASNSP1998,GV200103}.
\end{enumerate}    
\end{remark}

\subsection{Idea of proof}\label{Idea of Proof}

We adapt the pseudo-conformal amplitude--phase approach of
Hayashi, Kaikina, and Naumkin \cite{HMJ1998}, Hayashi and Naumkin
\cite{HMJ2001}, and Ginibre and Velo \cite{GV200103}.
The distinction from these works is the choice of gauge, or
equivalently, the way the transformed equation is written as a
coupled amplitude--phase system. In those works, the gauge phase
satisfies a Hamilton--Jacobi-type equation driven by the rescaled
Hartree interaction and containing a quadratic phase-gradient
term. Ginibre and Velo explicitly describe this as a
Hamilton--Jacobi gauge condition; in their higher-dimensional
formulation, Hayashi and Naumkin also allow a viscous term in the
phase equation. In \cite{HMJ1998} and in the one-dimensional
argument of \cite{HMJ2001}, the resulting derivative loss is
controlled in analytic spaces, while \cite{GV200103} uses Gevrey
spaces for the construction of modified wave operators. In these
analytic and Gevrey arguments, a decreasing analytic or Gevrey radius provides the favorable term in the weighted energy estimates to absorb the terms with derivative loss.

Here, after the pseudo-conformal transformation
\eqref{eqn:intro-b}, we instead take the full Hartree potential as the time derivative of the real phase:
\[
 \Phi(t,v):=\frac{\mu}{2\pi}
 \int_1^t(W_\tau\ast_v|b(\tau)|^2)(v)\,d\tau,
 \qquad
 a(t,v):=e^{i\Phi(t,v)}b(t,v).
\]
The phase is chosen so that the nonintegrable term no longer
appears in the equation for $a$:
\[
 i\partial_t a
 =
 -\frac{1}{2t^2}\partial_v^2a
 +\frac{i}{t^2}
   \left((\partial_v\Phi)\partial_va
       +\frac12(\partial_v^2\Phi)a\right)
 +\frac{(\partial_v\Phi)^2}{2t^2}a.
\]
We therefore work with a coupled system for $a$ and a
time-normalized phase derivative. This gauge changes the algebraic
form of the system, but it does not eliminate the need to handle
derivative loss.

The coupled equations contain terms with an additional half derivative.
To control these terms, we allow the analyticity radius
$z=\lambda(t)$ to decrease in time, as in
\cite{HMJ2001,GV200103}. We formulate the argument in terms of the generator function $G(t,z)$,
following Grenier, Nguyen, and Rodnianski
\cite{GrenierNguyenRodnianski2021} and Gagnebin and Iacobelli
\cite{GagnebinIacobelli2023}. Indeed,
$\partial_zG$ controls the terms containing an additional half derivative,
while differentiating $G(t,\lambda(t))$ produces the term
$\lambda'(t)\partial_zG(t,\lambda(t))$. We choose $-\lambda'(t)$ to dominate
the coefficient of $\partial_zG$, thereby absorbing the terms with
derivative loss. In our case, the required rate of decrease of $\lambda(t)$
is integrable in time. For sufficiently small data, the analyticity radius remains uniformly
positive, and the bootstrap estimates close globally.

These bounds imply convergence of $a(t)$ in a slightly smaller
analytic space. Finally, the decomposition
\eqref{eqn:intro-soft-coulomb-expansion} identifies both the
$(\log t)^2$ and $\log t$ terms of $\Phi$. The remaining bounded
phase is absorbed into $\widetilde a_\infty$, yielding the
asymptotic formula in Theorem~\ref{thm:main}.

\subsection{Future directions: one-dimensional density-matrix equations}

The scalar Hartree equation \eqref{eqn:Hartree} considered in this paper is
the simplest scalar mean-field model for many-body quantum dynamics with
the soft Coulomb interaction. More physically relevant models are provided
by density-matrix formulations, most notably the time-dependent
Hartree--Fock and Kohn--Sham equations, which play central roles in
electronic-structure theory and density-functional theory. One-dimensional
many-electron and density-functional models based on the soft Coulomb
interaction have been extensively investigated in
\cite{WagnerPCCP2012,WagnerPRL2012,PhysRevLett.Thiele}; see also the
references therein.

For a nonnegative trace-class density matrix $\gamma$ on $L^2(\mathbb R)$,
write $\rho_\gamma(x)=\gamma(x,x)$, and let $X_\gamma$ be the exchange
operator with kernel
\[
 X_\gamma(x,y)=w(x-y)\gamma(x,y).
\]
The Hartree--Fock equation is given by
\begin{equation}\label{eqn:HartreeFock}
 i\partial_t\gamma
 =
 \bigl[-\tfrac12\Delta+w\ast\rho_\gamma-X_\gamma,\gamma\bigr].
\end{equation}
It is formally equivalent to the infinite coupled system
\begin{equation}\label{coupledNLS}
 i\partial_tu_j
 =
 -\tfrac12\Delta u_j
 +\sum_{k=1}^{\infty}\nu_k(w\ast|u_k|^2)u_j
 -\sum_{k=1}^{\infty}
 \nu_k\bigl(w\ast(u_j\overline{u_k})\bigr)u_k,
 \qquad j=1,2,\ldots,
\end{equation}
where
\[
 \gamma(t)
 =
 \sum_{j=1}^{\infty}
 \nu_j\lvert u_j(t)\rangle\langle u_j(t)\rvert.
\]
Here, $\{u_j(t)\}_{j=1}^{\infty}$ is an orthonormal system in
$L^2(\mathbb R)$ for each $t$, $\nu_j\geq0$, and
$\sum_{j=1}^{\infty}\nu_j<\infty$. The scalar Hartree equation
\eqref{eqn:Hartree}, with $\mu = \nu_1 >0$, is obtained from the direct Hartree part of
\eqref{coupledNLS} by retaining a single orbital and omitting the exchange
term.

In velocity variables, applying
\eqref{eqn:intro-soft-coulomb-expansion} componentwise gives a 
leading local contribution of order $(2\log t)/t$ to the direct term, proportional
to $(\sum_k\nu_k|b_k|^2)b_j$. The exchange term gives the same local contribution with the opposite sign. These terms therefore
cancel in the formal asymptotic expansion;
equivalently, the combined Hartree--Fock integrand vanishes on the diagonal,
which removes the formal source of the $(\log t)^2$ phase found
in the scalar Hartree equation.

Exploiting an analogous direct--exchange cancellation, Mal\'ez\'e
\cite{Maleze2025} proved linear scattering in one dimension for small,
localized data when the interaction potential is a finite measure.
This raises the natural question of whether an analogous cancellation 
can be rigorously justified for the long-range, nonintegrable soft Coulomb interaction and, if so, whether the remaining contribution produces a logarithmic phase or undergoes further cancellation leading to linear scattering.

For Kohn--Sham dynamics, the direct Hartree Hamiltonian is instead supplemented by a local exchange--correlation potential $V_{\mathrm{xc}}[\rho_\gamma]$:
\begin{equation}\label{eqn:KohnSham}
 i\partial_t\gamma
 =
 \bigl[-\tfrac12\Delta+w\ast\rho_\gamma
       +V_{\mathrm{xc}}[\rho_\gamma],\gamma\bigr].
\end{equation}
The effect of this local term on the long-time behavior also remains to be studied for the soft Coulomb interaction. We refer to \cite{Pusateri2021} for short-range scattering
in time-dependent Kohn--Sham models and to \cite{KLYY2026arxiv} for recent
modified scattering results in critical two- and three-dimensional models.

\subsection*{Organization of the paper}
The remainder of the paper is organized as follows. In
Section~\ref{sec:preliminaries}, we introduce the pseudoconformal variables,
derive the coupled system for the modified profile and normalized phase
derivative, and collect the analytic-space estimates used below.
Section~\ref{sec:local-wellposedness} establishes local well-posedness and
propagates the exponentially weighted norm to $t=1$, thereby providing
the initial analytic control for the transformed system. In
Section~\ref{sec:nonlinear-estimates}, we derive analytic energy estimates,
close the global bootstrap using a slowly decreasing radius of
analyticity, and prove the dispersive and phase-mixing bounds. Finally, in
Section~\ref{sec:modified-scattering}, we prove convergence of the modified
profile, obtain a two-term asymptotic expansion for the rescaled soft Coulomb
interaction, and identify the $(\log t)^2$ and $\log t$ phase corrections,
completing the proof of Theorem~\ref{thm:main}.

\subsection*{Acknowlegements}
The work of C.~Yang was supported by a funding for the academic research program of Chungbuk National University in 2026 and by the National Research
Foundation of Korea (NRF) grant funded by the Korea government (MSIT) (RS-2026-25587425).
The authors acknowledge the use of AI tools for proofreading and language editing. The authors reviewed all revisions independently, and take full responsibility for the mathematical content of the manuscript.

\section{Preliminaries}\label{sec:preliminaries}
In this section, we introduce the pseudoconformal variables and analytic norms used throughout the paper, and record some lemmas needed in the nonlinear estimates.

\subsection{Notations}

The Fourier transform and its inverse are defined by
\[
\mathcal{F}f(k) = \widehat{f}(k) = \int_{\RR} e^{-i k x} f(x) \, d x, \qquad \mathcal{F}^{-1} g(x) = \frac{1}{2\pi} \int_{\RR} e^{ikx} g(k) \, dk.
\]
We define for $t>0$ that
\[
U(t) =e^{i t \partial_x^2/2} , \qquad 
(M(t) f)(x) = e^{i x^2/(2t)} f(x), \qquad (D(t) f)(x) = (2\pi i t)^{-1/2} f(x/t).
\]
Then the free Schr\"odinger group admits the factorization
\[
U(t) = M(t) D(t)\mathcal{F}M(t).
\]
We will also use the Galilean vector field
\[
    J(t) = x + it\partial_x = U(t) x U(-t) = M(t) (it\partial_x) M(t)^{-1},
\]
We then define $e^{c J(t)}$ for $c \in \mathbb{R}$ using functional calculus. Namely,
\[
    e^{c J(t)} = U(t) e^{cx} U(-t).
\]

\subsection{Pseudoconformal transformation}
For $t >0$, we consider the pseudoconformal transformation
\[
    u(t) = M(t) D(t) b(t).
\]
Equivalently, we set $v=x/t$ and define the profile $b=b(t,v)$ by
\[
	b(t,v) = (2\pi it)^{1/2}e^{-itv^2/2} u(t,tv).
\]
Recalling that $u$ solves \eqref{eqn:Hartree}, a direct computation gives
\begin{equation}\label{eqn:Hartree-pseudoconformal}
	i\partial_t b + \frac{1}{2t^2} \partial_v^2 b = \frac{\mu}{2\pi} (W_t \ast_v |b|^2)b,
\end{equation}
where
\begin{equation}\label{def:potential-pseudoconformal}
	W_t(v) := w(tv) = \langle tv \rangle^{-1}.
\end{equation}

Since $W_t \ast_v |b(t)|^2$ is expected to decay at a rate $t^{-1} \log t$, we absorb this term to a phase. Namely, we define 
\[
	\Phi(t,v) = \frac{\mu}{2\pi} \int_1^t  (W_\tau \ast_v |b(\tau)|^2)(v) \, d\tau
\]
which is real-valued. Define the modified profile
\[
	a(t,v):= e^{i\Phi(t,v)} b(t,v).
\]
Denote $\phi(t,v) =\partial_v \Phi(t,v)$ for notational convenience. The equation for $a$ becomes
\begin{equation}\label{eqn:a-1}
	i\partial_t a 
	= -\frac{1}{2t^2} \partial_v^2 a  + \frac{i}{t^2} \left(\phi \partial_v a 
	+ \frac{1}{2} (\partial_v \phi)a\right) +\frac{1}{2t^2} \phi^2 a.
\end{equation}
Moreover, $\phi$ solves
\begin{equation}\label{eqn:phi-1}
	\partial_t \phi = \frac{\mu}{2\pi} \partial_v (W_t \ast |a|^2), \qquad \phi(t=1)=0.
\end{equation}
One expects
	\[
		\Phi(t,v) = O((\log t)^2), \qquad \phi(t,v) = O((\log t)^2).
	\]
We normalize $\phi$ by introducing
\[
	h(t,v) = \Theta(t)^{-1-\delta} \phi(t,v), \qquad \Theta(t) = \log^2 (1+t)
\]
for $t \ge 1$ and for some fixed $\delta >0$. Note that $h(1)=0$. We allow a logarithmic loss by fixing $\delta >0$ to ensure that the coefficient in the energy estimate is integrable in time, see Section \ref{sec:bootstrap}.
Then \eqref{eqn:a-1} becomes
\begin{equation}\label{eqn:a-2}
	i\partial_t a 
	= -\frac{1}{2t^2} \partial_v^2 a  + \frac{i\Theta(t)^{1+\delta}}{t^2} \left(h \partial_v a 
	+ \frac{1}{2} (\partial_v h)a\right) +\frac{\Theta(t)^{2+2\delta}}{2t^2} h^2 a.
\end{equation}
Also, \eqref{eqn:phi-1} becomes
\begin{equation}\label{eqn:phi-2}
	\partial_t h + (1+\delta)\frac{\Theta'(t)}{\Theta(t)} h =  \frac{\mu}{2\pi} \Theta(t)^{-1-\delta} \partial_v (W_t \ast_v |a|^2).
\end{equation}

\begin{remark}\label{rem:derivativeloss}
Then if we perform energy estimate for the normalized system for $(a,h)$ by setting
\[
    E_s(t) := \| a(t) \|_{H^s}^2 + \| h(t) \|_{H^s}^2,
\]
then one obtains the estimate of the form
\[
    \frac{d}{dt} E_s(t)
    \lesssim
    \frac{\log^{4+4\delta}(1+t)}{t^2} E_s(t)^2
    +
    \frac{E_s(t)^{1/2}
    \big(E_s(t)+D_s(t)\big)}{t\log^{1+2\delta}(1+t)} ,
\]
where
\[
    D_s (t)
    = 
    \| |\partial_v|^{1/2} a(t) \|_{H^s}^2
    +
    \| |\partial_v|^{1/2} h(t) \|_{H^s}^2.
\]
This is indeed a derivative loss in the sense that it cannot be closed for a fixed $s$. This does not rule out a different argument in Sobolev spaces.
\end{remark}

\subsection{Analytic function space}\label{sec:Analytic Function Space}
For $z \ge 0$ and $s \ge 0$, we introduce the analytic norm
\[
	\| f \|_{z,s}^2
    := \| e^{z|\partial_v|} \langle \partial_v \rangle^s f \|_{L^2_v}^2
    =
    \frac{1}{2\pi} \int_{\RR} e^{2z|\eta|} \langle \eta \rangle^{2s}|\widehat{f}(\eta)|^2 \, d\eta.
\]
We denote the corresponding space as $\mathcal{G}^{z,s}(\mathbb{R})$.
The inner product $\langle \cdot, \cdot\rangle_{z,s}$ has the same Fourier weight.
The Sobolev correction $\langle \partial_v\rangle^s$ allows us to estimate products without loss of analyticity radius. We assume $s>1$ to ensure that $\mathcal{G}^{z,s-1/2}(\mathbb{R})$ is an algebra.

We record some properties of the analytic function space.
\begin{lemma}\label{lem:analyticspace}
	Let $r\ge s >1/2$ and $z \ge 0$.
	Then 
	\[
	\| fg \|_{z,r} \lesssim \|f \|_{z,r}  \|g \|_{z,s} + \| f \|_{z,s} \| g \|_{z,r}.
	\]
	In particular, $\mathcal{G}^{z,s}(\mathbb{R})$ is an algebra, that is,
	\[
	\| f g \|_{z,s} \lesssim_s \| f \|_{z,s} \| g \|_{z,s}.
	\]
	Moreover,
	\[
		\left| \langle f ,g \rangle_{z,s} \right| \lesssim \| f \|_{z,s-1/2} \| g \|_{z,s+1/2}.
	\]
	Finally,
	\[
		\| f \|_{z,s+1/2}^2 \lesssim \| f \|_{z,s}^2 + \| |\partial_v|^{1/2} f \|_{z,s}^2.
	\]
\end{lemma}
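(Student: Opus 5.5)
All four assertions of Lemma~\ref{lem:analyticspace} will be reduced to weighted convolution inequalities on the Fourier side. Set $F(\xi)=e^{z|\xi|}|\widehat f(\xi)|$ and $G(\eta)=e^{z|\eta|}|\widehat g(\eta)|$. The proof rests on two elementary facts:
\[
\widehat{fg}=\tfrac{1}{2\pi}\,\widehat f\ast\widehat g,
\qquad
e^{z|\xi|}\le e^{z|\xi-\eta|}e^{z|\eta|}\quad (z\ge0).
\]
The second holds by the triangle inequality. Together they give
\[
e^{z|\xi|}|\widehat{fg}(\xi)|\lesssim (F\ast G)(\xi).
\]
The analytic weight therefore passes harmlessly through the product, and $\|fg\|_{z,r}$ is controlled by the ordinary $H^r$ norm of the product of the functions whose Fourier transforms are $F$ and $G$. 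In the same way, $\|f\|_{z,r}$ equals the $H^r$ norm of the function with Fourier modulus $F$.

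\textbf{Product estimate.} This reduces to the classical Kato--Ponce / Moser-type estimate on the Fourier side. Use
\[
\langle\xi\rangle^r\lesssim_r \langle\xi-\eta\rangle^r+\langle\eta\rangle^r
\]
to split
\[
\langle\xi\rangle^r(F\ast G)\lesssim (\langle\cdot\rangle^rF)\ast G+F\ast(\langle\cdot\rangle^rG).
\]
Apply Young's inequality in the form $L^2\ast L^1\to L^2$ to each piece, and bound
\[
\|G\|_{L^1}\le \|\langle\cdot\rangle^{-s}\|_{L^2}\,\|\langle\cdot\rangle^sG\|_{L^2}\lesssim \|g\|_{z,s}.
\]
This is where $s>1/2$ enters. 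The result is
\[
\|fg\|_{z,r}\lesssim \|f\|_{z,r}\|g\|_{z,s}+\|f\|_{z,s}\|g\|_{z,r}.
\]
Taking $r=s$ gives the algebra property.

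\textbf{Inner product and interpolation bounds.} The pairing estimate follows from writing
\[
\langle f,g\rangle_{z,s}=\tfrac1{2\pi}\int e^{2z|\eta|}\langle\eta\rangle^{2s}\widehat f\,\overline{\widehat g}\,d\eta,
\]
splitting $\langle\eta\rangle^{2s}=\langle\eta\rangle^{s-1/2}\langle\eta\rangle^{s+1/2}$, and applying Cauchy--Schwarz. The last inequality is pointwise on the Fourier side:
\[
\langle\eta\rangle^{2s+1}=\langle\eta\rangle^{2s}(1+\eta^2)^{1/2}\le\langle\eta\rangle^{2s}(1+|\eta|).
\]
Multiplying by $e^{2z|\eta|}|\widehat f|^2$ and integrating yields
\[
\|f\|_{z,s+1/2}^2\le\|f\|_{z,s}^2+\||\partial_v|^{1/2}f\|_{z,s}^2.
\]

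The only step needing care is the product estimate: one must check that the exponential weight causes no loss of radius. The subadditivity $|\xi|\le|\xi-\eta|+|\eta|$ ensures this, and it is exactly why the norm uses $e^{z|\partial_v|}$ rather than a smoother weight.
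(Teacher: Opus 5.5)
Your proof is correct and is essentially the paper's argument. It uses the same ingredients: the Fourier-side bounds $e^{z|\xi|}\le e^{z|\eta|}e^{z|\xi-\eta|}$ and $\langle\xi\rangle^r\lesssim\langle\eta\rangle^r+\langle\xi-\eta\rangle^r$, Young's inequality $L^2\ast L^1\to L^2$ with $\|e^{z|\cdot|}\widehat g\|_{L^1}\lesssim\|\langle\cdot\rangle^{-s}\|_{L^2}\|g\|_{z,s}$, Cauchy--Schwarz via $2s=(s-\tfrac12)+(s+\tfrac12)$, and the pointwise bound $\langle\eta\rangle^{2s+1}\le\langle\eta\rangle^{2s}(1+|\eta|)$.
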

\begin{proof}
	Note that
	$e^{z|\xi|}  \le e^{z|\eta|} e^{z|\xi-\eta|}$
	and
	$
		\langle \xi \rangle^r \lesssim \langle \eta \rangle^r + \langle \xi-\eta \rangle^r
	$.
	Therefore, we get
	\[
		e^{z|\xi|} \langle \xi \rangle^r \lesssim e^{z|\eta|} \langle \eta \rangle^r e^{z|\xi-\eta|} + e^{z|\eta|} e^{z|\xi-\eta|} \langle \xi -\eta \rangle^r.
	\]
	It follows that
	\begin{align*}
		e^{z|\xi|} \langle \xi \rangle^r |\widehat{fg}(\xi)|
		&\lesssim  \int e^{z|\eta|}\langle \eta \rangle^r |\widehat{f}(\eta)| e^{z|\xi-\eta|} |\widehat{g}(\xi-\eta)| \, d\eta \\
		&\qquad + \int e^{z|\eta|} |\widehat{f}(\eta)|e^{z|\xi-\eta|} \langle \xi-\eta \rangle^r  |\widehat{g}(\xi-\eta)| \, d\eta.
	\end{align*}
	Taking $L^2_{\xi}$ on both sides and applying Young's inequality, we obtain that 
	\[
		\| fg \|_{z,r} \lesssim \| f \|_{z,r} \| e^{z|\xi|}\widehat{g} \|_{L^1_{\xi}} + \| e^{z|\xi|} \widehat{f} \|_{L^1_{\xi}}\| g \|_{z,r}.
	\]
	Since $s>1/2$, we get $\langle \xi \rangle^{-s} \in L^2(\mathbb{R})$. Then we have
	\[
		\| e^{z|\xi|} \widehat{f} \|_{L^1} \lesssim \| \langle \xi \rangle^{-s} \|_{L^2} \| f \|_{z,s} \lesssim \| f \|_{z,s}.
	\]
	We thus obtain that
	\[
		\| fg \|_{z,r} \lesssim \|f \|_{z,r}  \|g \|_{z,s} + \| f \|_{z,s} \| g \|_{z,r}
	\]
	for $r \ge s >1/2$. In particular, substituting $r=s$, we obtain that $\mathcal{G}^{z,s}(\mathbb{R})$ is an algebra.
	
	Next, using $2s= (s-1/2) + (s+1/2)$ and applying Cauchy--Schwarz, we get
	\begin{align*}
		|\langle f,g\rangle_{z,s}|
		&=
		\frac{1}{2\pi}
        \left|
		\int e^{2z|\eta|}
		\langle \eta \rangle^{2s} \widehat f(\eta)\overline{\widehat g(\eta)} \, d\eta
		\right| \\
		&\le \frac{1}{2\pi}
		\left(
		\int e^{2z|\eta|} \langle \eta \rangle^{2s-1} |\widehat{f}(\eta)|^2 \, d\eta
		\right)^{1/2} 
		\left(
		\int e^{2z|\eta|} \langle \eta \rangle^{2s+1} |\widehat{g}(\eta)|^2 \, d\eta
		\right)^{1/2} \\
		&=
		\|f\|_{z,s-1/2}\,
		\|g\|_{z,s+1/2}.
	\end{align*}
	Finally, the last bound in the lemma follows from $\langle \eta \rangle^{2s+1} \le \langle \eta \rangle^{2s}+|\eta| \langle \eta \rangle^{2s}$.
    This completes the proof of Lemma \ref{lem:analyticspace}.
\end{proof}

\section{Local well-posedness}\label{sec:local-wellposedness}
In this section, we establish local well-posedness and propagate the weighted norm
\[
    \| e^{\lambda_0 |J(t)|} u(t) \|_{L^2_x}
\]
on a time interval containing $t=1$. In particular, this yields the initial bound at $t=1$ needed for the analysis using pseudoconformal transformation.

\begin{lemma}\label{lem:LWP}
    Let $\lambda_0 > \lambda_1>0$ and $T_0 >1$. Define
    \[
        R= \| e^{\lambda_0 |x|} u_0 \|_{L^2_x} < \infty.
    \]
    There exists $\epsilon_0 >0$ such that if $R \le \epsilon_0$, then \eqref{eqn:Hartree} admits a unique solution $u \in C([0,T_0];L^2)$ satisfying
    \[
        \sup_{t \in[0,T_0]} \| e^{\lambda_0|J(t)|}  u(t) \|_{L^2_x} \le 2R.
    \]
    Consequently, for $s \ge 0$, we have
    \[
        \sup_{t \in[0,T_0]} \| e^{\lambda_1 |J(t)|} \langle J(t) \rangle^s  u(t) \|_{L^2} \le 2C R
    \]
    for some $C>0$ depending only on $s$ and $\lambda_0 - \lambda_1$.
\end{lemma}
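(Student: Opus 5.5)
Work with $\cosh$-type weights in place of $e^{\lambda|J|}$. Since $e^{\lambda|x|}\le 2\cosh(\lambda x)\le 2e^{\lambda|x|}$, it suffices to control
\[
N(t):=\|e^{\lambda_0J(t)}u(t)\|_{L^2}+\|e^{-\lambda_0J(t)}u(t)\|_{L^2}.
\]
The key algebraic fact is
\[
e^{cJ(t)}=U(t)e^{cx}U(-t)=M(t)e^{ict\partial_x}M(t)^{-1}.
\]
So $e^{cJ(t)}$ is, up to the unimodular multiplier $M(t)$, the analytic continuation of translation by $ict$. It therefore acts as a \emph{derivation-type} operator on products: since $M(t)^{-1}$ commutes with complex conjugation up to a sign in the phase, one has
\[
e^{cJ(t)}(Vu)=\bigl(e^{ict\partial_x}V\bigr)\,e^{cJ(t)}u
\]
for any potential $V$. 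For the Hartree potential $V=w*|u|^2$,
\[
(e^{ict\partial_x}V)(x)=\int w(x+ict-y)|u(y)|^2dy,
\]
where $w(x+ict)=(1+(x+ict)^2)^{-1/2}$.

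\textbf{Step 1: the linear part.} Applying $e^{\pm\lambda_0J}$ to the Duhamel formula, the linear part gives
\[
e^{\pm\lambda_0J(t)}U(t)u_0=U(t)e^{\pm\lambda_0x}u_0,
\]
whose norm is at most $R$.

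\textbf{Step 2: the nonlinear part.} The Duhamel term becomes
\[
\int_0^tU(t-\tau)\,e^{\pm\lambda_0J(\tau)}\bigl(Vu\bigr)(\tau)\,d\tau,
\]
using $U(t-\tau)J(\tau)=J(t)U(t-\tau)$. By the product rule above, this is bounded by
\[
\int_0^t\|e^{\pm i\lambda_0\tau\partial_x}V(\tau)\|_{L^\infty}\,\|e^{\pm\lambda_0J(\tau)}u(\tau)\|_{L^2}\,d\tau.
\]
The first factor requires $w$ to extend analytically and boundedly to the strip $|\operatorname{Im}\zeta|\le\lambda_0T_0$. \emph{This is the main obstacle:} $w$ has branch points at $\zeta=\pm i$, so this works only if $\lambda_0T_0<1$.

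To get around this, instead write $|u|^2=u\bar u$ and shift the complex translation onto the density: $e^{ict\partial_x}$ applied to $w*(u\bar u)$ equals $w*\bigl((e^{ict\partial}u)(e^{ict\partial}\bar u)\bigr)$ with real kernel $w$. Unwinding $M(t)$, this becomes
\[
w*\bigl(e^{cJ}u\cdot\overline{e^{-cJ}u}\bigr)
\]
modulo the phase. Then Young's inequality with $w\in L^\infty$ gives
\[
\|e^{cJ}(Vu)\|_{L^2}\le\|w\|_{L^\infty}\,\|e^{cJ}u\|_{L^2}\,\|e^{-cJ}u\|_{L^2}\,\|e^{cJ}u\|_{L^2},
\]
which is cubic in $N$. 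This identity needs to be checked carefully with the phases $e^{ix^2/2t}$; I expect it to hold because $J$ acting on $u\bar u$ satisfies $J(u\bar u)$-type Leibniz rules with $\bar J=-J$ on conjugates. A contraction in the norm $\sup_{t\le T_0}N(t)$ then gives
\[
N\le R+CT_0N^3,
\]
so $N\le 2R$ for $R$ small depending on $T_0$. This produces existence, uniqueness, and the first bound.

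\textbf{Step 3: the consequence.} The second bound follows from functional calculus: $e^{\lambda_1|J|}\langle J\rangle^s=g(J)\,e^{\lambda_0|J|}$ with
\[
g(y)=\langle y\rangle^se^{-(\lambda_0-\lambda_1)|y|},
\]
and $g$ is bounded by a constant depending on $s$ and $\lambda_0-\lambda_1$. Hence
\[
\|e^{\lambda_1|J|}\langle J\rangle^su\|_{L^2}\le\|g\|_{L^\infty}\,\|e^{\lambda_0|J|}u\|_{L^2}\le 2CR.
\]
Here $J(t)$ is self-adjoint, being unitarily conjugate to $x$, so the spectral theorem applies.
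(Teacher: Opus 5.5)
Your proposal is correct and follows essentially the paper's route. The paper proves the same identity $e^{cJ}\bigl((w*(u_1\overline{u_2}))u_3\bigr)=\bigl(w*(e^{cJ}u_1\,\overline{e^{-cJ}u_2})\bigr)e^{cJ}u_3$ by exponentiating the Leibniz rule for $J=M(t)(it\partial_x)M(t)^{-1}$, which is your complex-translation picture in infinitesimal form. In that identity the $M(t)$ phases cancel exactly in $u_1\overline{u_2}$, so no ``modulo the phase'' caveat is needed. The paper then uses $\|w\|_{L^\infty}$ with $c=\pm\lambda_0$, runs a contraction, and finishes with $e^{\lambda_1 r}\langle r\rangle^s\le Ce^{\lambda_0 r}$.

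One bookkeeping point: the linear part of your $N(t)$ is bounded by $2R$, not $R$, so $N\le R+CT_0N^3$ is off by a factor of two. To get the stated constant $2R$, run the contraction in the norm $\sup_t\|e^{\lambda_0|J(t)|}u(t)\|_{L^2}$, whose linear part is exactly $R$, and use the $\pm\lambda_0$ weights only inside the nonlinear estimate, as the paper does.
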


\begin{proof}
    Let $\mathcal{N}(u_1,u_2,u_3)= (w \ast (u_1 \overline{u_2})) u_3$, $N(u) =\mathcal{N}(u,u,u)$, and $\widetilde{u}_j(t) = M(t)^{-1} u_j(t)$ for $j=1,2,3$.  Then we have 
    $$
        M(t)^{-1}\mathcal{N}(u_1(t),u_2(t),u_3(t))
        =
        \big(w \ast (\widetilde{u}_1(t) \overline{\widetilde{u}_2(t)})\big)\widetilde{u}_3(t)
        =
        \mathcal{N}(\widetilde{u}_1(t), \widetilde{u}_2(t), \widetilde{u}_3(t)).
    $$
    We often omit the dependence on $t$ to simplify the notation. We obtain that
    \begin{align*}
        J\mathcal{N}(u_1, u_2, u_3)
        &= 
        M(t) (it\partial_x) 
        \mathcal{N}(\widetilde{u}_1, \widetilde{u}_2, \widetilde{u}_3) \\
        &=
        M(t) 
        \Big(
        (w \ast (it\partial_x(\widetilde{u}_1 \overline{\widetilde{u}_2})))\widetilde{u}_3 + (w \ast (\widetilde{u}_1 \overline{\widetilde{u}_2})) it\partial_x \widetilde{u}_3
        \Big).
    \end{align*}
    Using 
   $ 
        it\partial_x (\widetilde{u}_1\overline{\widetilde{u}_2})
        = 
        (it\partial_x \widetilde{u}_1) \overline{\widetilde{u}_2}
        - \widetilde{u}_1 \overline{it\partial_x \widetilde{u}_2},
    $
    and $J(t)u_j(t)=M(t)it\partial_x \widetilde{u}_j(t)$, 
    we get
    \[
        J\mathcal{N}(u_1, u_2, u_3)
        =\mathcal{N}(Ju_1, u_2, u_3) -\mathcal{N}(u_1, Ju_2, u_3) + \mathcal{N}(u_1, u_2, Ju_3).
    \]
    Applying $J(t)$ repeatedly, we get
    \[
        J^n \mathcal{N}(u_1,u_2,u_3) = \sum_{\alpha_1 + \alpha_2 + \alpha_3= n} (-1)^{\alpha_2} \frac{n!}{\alpha_1! \alpha_2! \alpha_3!} \mathcal{N}(J^{\alpha_1} u_1, J^{\alpha_2} u_2, J^{\alpha_3} u_3).
    \]
    Using the identity
    \[
        e^{c J(t)} = \sum_{n=0}^{\infty} \frac{c^n}{n!} J(t)^n,
    \]
    for $c \in \mathbb{R}$, it follows that
    \[
        e^{c J(t)} \mathcal{N}(u_1(t),u_2(t),u_3(t))
        =
        \Big(w \ast (e^{c J(t)}u_1(t) \overline{e^{-c J(t)}u_2(t)})\Big)
        e^{c J(t)} u_3(t).
    \]
    We recall that $w (x) = \langle x \rangle^{-1} \in L^{\infty}(\mathbb{R})$. Hence,
    \begin{align*} 
        \| e^{c J(t)} \mathcal{N}(u_1(t), u_2(t) , u_3(t))\|_{L^2} &\le 
        \| w \|_{L^{\infty}} 
        \| e^{c J(t)} u_1(t) \, \overline{e^{-c J(t)} u_2(t)} \|_{L^1} 
        \| e^{c J(t)} u_3(t) \|_{L^2}\\
        &\le 
        \| w \|_{L^{\infty}} 
        \| e^{c J(t)} u_1(t) \|_{L^2}
        \| e^{-c J(t)} u_2(t) \|_{L^2}
        \| e^{c J(t)} u_3(t) \|_{L^2}.
    \end{align*}
    Taking $c = \pm \lambda_0$, we conclude that
    \begin{equation}\label{est:LWP-nonlinear}
    \begin{aligned}
        \| e^{\lambda_0 |J(t)|} \mathcal{N}(u_1(t), u_2(t), u_3(t))\|_{L^2}
        &\lesssim
        \| e^{\lambda_0 J(t)} \mathcal{N}(u_1(t), u_2(t), u_3(t)) \|_{L^2} + \| e^{-\lambda_0 J(t)} \mathcal{N}(u_1(t), u_2(t), u_3(t)) \|_{L^2} \\
        &\lesssim
        \| w \|_{L^{\infty}} 
        \prod_{j=1}^3\| e^{\lambda_0 |J(t)|} u_j(t) \|_{L^2}
    \end{aligned}
    \end{equation}
    for some universal constant $C>0$.
    For given $T_0>0$ and $\lambda_0 >0$, define a Banach space $X_{T_0}$ by
    \[
        X_{T_0} 
        = 
        \left\{ 
        u \in C([0,T_0];L^2): \| u \|_{X_{T_0}}:= \sup_{t \in[0,T_0]} \| e^{\lambda_0 |J(t)|} u(t) \|_{L^2} <\infty
        \right\}.
    \]
    Define $\Gamma$ by
    \[
        \Gamma[u](t) = e^{it\Delta/2} u_0 - i \mu\int_0^t e^{i(t-\tau)\Delta/2} N(u(\tau)) \, d\tau,
    \]
    and set $R_0 = \| e^{\lambda_0 |x|}u_0\|_{L^2}$. We prove that $\Gamma$ is a contraction on the closed ball 
    \[
    B_{2R_0}= \{ u \in X_{T_0} : \| u \|_{X_{T_0}} \le 2R_0\},
    \]
    provided that $R_0$ is sufficiently small (depending on $T_0$).
    Indeed, setting $u_1=u_2=u_3=u$ in \eqref{est:LWP-nonlinear}, we get
    \begin{equation*}
    \begin{aligned}
        \| \Gamma[u]\|_{X_{T_0}} 
        &\le
        \| e^{\lambda |J(t)|} e^{it\Delta/2} u_0 \|_{L^2}
        + |\mu|\int_0^t \| e^{\lambda |J(t)|} e^{i(t-\tau)\Delta/2} N(u(\tau)) \|_{L^2} \, d\tau \\
        &\le
        R_0 + |\mu|C_0 T_0 \| w \|_{L^{\infty}}\| u \|_{X_{T_0}}^3 \\
        &\le R_0 + 8|\mu|C_0T_0\|w\|_{L^{\infty}} R_0^3
    \end{aligned}
    \end{equation*}
    for some universal constant $C_0>0$, noting that $J(t) e^{i(t-\tau)\Delta/2} =e^{i(t-\tau)\Delta/2} J(\tau)$. Therefore, $\Gamma$ maps $B_{2R_0}$ to itself, provided that $8|\mu|CT_0 \|w\|_{L^{\infty}} R_0^2 \le 1$.
    Moreover, for $u^{(1)}$, $u^{(2)} \in B_{2R_0}$, we observe that
    \begin{equation*}
    \begin{aligned}
        &\Gamma[u^{(1)}(t)] - \Gamma[u^{(2)} (t)] \\
        &= 
        -i \mu \int_0^t  e^{i(t-\tau)\Delta/2} \left( N(u^{(1)}(\tau)) - N(u^{(2)}(\tau)) \right) \, d \tau \\
        &=
        -i \mu \int_0^t e^{i(t-\tau)\Delta/2} \left( 
        \mathcal{N}(u^{(1)}-u^{(2)}, u^{(1)}, u^{(1)}) 
        + \mathcal{N}(u^{(2)}, u^{(1)}-u^{(2)}, u^{(1)})
        + \mathcal{N}(u^{(2)}, u^{(2)}, u^{(1)}- u^{(2)})
        \right)  \, d\tau.
    \end{aligned}
    \end{equation*}
    Applying \eqref{est:LWP-nonlinear} again, we get
    \begin{align*}
        \| \Gamma[u^{(1)}] - \Gamma[u^{(2)}]\|_{X_{T_0}}
        &\le 
        |\mu|C_1 T_0 \| w \|_{L^{\infty}}
        \left( 
        \| u^{(1)} \|_{X_{T_0}}^2 
        + \| u^{(2)} \|_{X_{T_0}}^2 \right)
        \| u^{(1)} - u^{(2)} \|_{X_{T_0}} \\
        &\le
        8|\mu|C_1 T_0 \|w \|_{L^{\infty}} R_0^2 \| u^{(1)} - u^{(2)} \|_{X_{T_0}},
    \end{align*}
    for some universal constant $C_1 >0$. We conclude that $\Gamma$ is a contraction on $B_{2R_0}$, provided that $|\mu|T_0 R_0^2$ is sufficiently small. Finally, since $s \ge 0$ and $\lambda_0 > \lambda_1 >0$ are given, we have $e^{\lambda_1 r }  \langle r \rangle^s\le C e^{\lambda_0 r}$ for $r\ge 0$, where $C>0$ is a constant depending on $s$ and $\lambda_0-\lambda_1$. This gives
    \[
        \| e^{\lambda_1 |J(t)|} \langle J(t) \rangle^s  u(t) \|_{L^2} \le C \| e^{\lambda_0|J(t)|} u(t) \|_{L^2}.
    \]
    This completes the proof of Lemma \ref{lem:LWP}.
\end{proof}

We now apply Lemma \ref{lem:LWP} with $\lambda_0= \sigma$ and $\lambda_1 = \sigma /2$, where $\sigma >0$ is given in Theorem \ref{thm:main}.
Since $\Phi(1)=0$ and $h(1)=0$, we have $a(1)=b(1)$ and hence
\[
    \| a(1) \|_{\lambda_1, s} 
    =
    \|b(1) \|_{\lambda_1,s} 
    =
    \sqrt{2\pi} \| e^{\lambda_1 |J(1)|}\langle J(1) \rangle^s u(1) \|_{L^2_x}.
\]
By Lemma \ref{lem:LWP}, we get
\[
    \|a(1)\|_{\lambda_1, s} + \| h(1) \|_{\lambda_1, s} = \| a(1) \|_{\lambda_1, s} \le C_* \varepsilon
\]
for some $C_*>0$. We set $\epsilon = C_* \varepsilon$.
From now on, we restrict our analysis to $t \ge 1$.

\section{Nonlinear estimates}\label{sec:nonlinear-estimates}

Fix $T>1$, and let $(a,h)$ be a sufficiently regular solution of $\eqref{eqn:a-2}$ and $\eqref{eqn:phi-2}$ on $[1,T]$.
We define the generator function
\[
    G(t,z) := \| a (t) \|_{z,s}^2 +\|h(t) \|_{z,s}^2.
\]
We remark that this can be represented as
\[
    G(t,z)
    =
    \sum_{m=0}^{\infty} \frac{(2z)^m}{m!}
        \left(
        \| |\partial_v|^{m/2} a(t)\|_{H^s_v}^2 + \| |\partial_v|^{m/2} h(t) \|_{H^s_v}^2 
        \right).
\]
We also note that
\[
    \partial_z G(t,z) 
    = 
    2 \| |\partial_v |^{1/2} a(t) \|_{z,s}^2
    +
    2 \| |\partial_v |^{1/2} h(t) \|_{z,s}^2.
\]

\subsection{Energy estimates}
In this section, we perform energy estimates in analytic norms.

\begin{proposition}\label{prop:energy-a}
	Let $z\ge 0$ and $s>1$. Then we have
	\begin{equation}
		\frac{d}{dt} \| a(t) \|_{z,s}^2
		\lesssim \frac{\Theta(t)^{1+\delta}}{t^2} G(t,z)^{1/2} \left(G(t,z) + \partial_z G(t,z) \right) + \frac{\Theta(t)^{2+2\delta}}{t^2} G(t,z)^2
	\end{equation}
    for $1 \le t \le T$.
\end{proposition}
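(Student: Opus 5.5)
We compute $\frac{d}{dt}\|a\|_{z,s}^2 = 2\operatorname{Re}\langle \partial_t a, a\rangle_{z,s}$ at fixed $z$ and substitute \eqref{eqn:a-2}, writing $\partial_t a = -i(\cdots)$. We then estimate the three resulting contributions separately.

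The dispersive term $\frac{i}{2t^2}\partial_v^2 a$ contributes $\operatorname{Re}\, \frac{i}{t^2}\langle \partial_v^2 a, a\rangle_{z,s}$. Since the Fourier multiplier $e^{z|\partial_v|}\langle\partial_v\rangle^s$ commutes with $\partial_v^2$ and $\partial_v^2$ is self-adjoint, $\langle \partial_v^2 a,a\rangle_{z,s}$ is real. Hence this contribution vanishes identically.

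The cubic potential term is $-\frac{i\Theta^{2+2\delta}}{2t^2}h^2a$. Its contribution is bounded by $\frac{\Theta^{2+2\delta}}{t^2}\|h^2a\|_{z,s}\|a\|_{z,s}$. By the algebra property in Lemma \ref{lem:analyticspace} (valid since $s>1/2$), this is $\lesssim \frac{\Theta^{2+2\delta}}{t^2}\|h\|_{z,s}^2\|a\|_{z,s}^2\le \frac{\Theta^{2+2\delta}}{t^2}G(t,z)^2$.

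The main term is the transport term $\frac{\Theta^{1+\delta}}{t^2}\bigl(h\partial_v a+\frac12(\partial_v h)a\bigr)$. It carries one derivative, so we must gain by pairing half-derivatives. Write $\Lambda=e^{z|\partial_v|}\langle\partial_v\rangle^s$. We split
\[
\Lambda(h\partial_v a)=h\,\partial_v\Lambda a+[\Lambda,h]\partial_v a .
\]
For the first piece, integrating by parts gives $2\operatorname{Re}\int h\,\partial_v(\Lambda a)\overline{\Lambda a}\,dv=-\int (\partial_v h)|\Lambda a|^2\,dv$. This is bounded by $\|\partial_v h\|_{L^\infty}\|a\|_{z,s}^2\lesssim\|h\|_{z,s}\|a\|_{z,s}^2$, using Sobolev embedding and $s>3/2$. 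If only $s>1$ is available, we instead control $\|\partial_vh\|_{L^\infty}$ through $\|h\|_{z,s+1/2}$, which is admissible because of the final inequality of Lemma \ref{lem:analyticspace}.

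For the commutator we work on the Fourier side. We use the symbol bound
\[
|e^{z|\xi|}\langle\xi\rangle^s-e^{z|\eta|}\langle\eta\rangle^s|\lesssim |\xi-\eta|\,(z+\langle\cdot\rangle^{-1})\,e^{z|\xi-\eta|}e^{z|\eta|}\langle\eta\rangle^{s}\langle\xi-\eta\rangle^{s}
\]
(up to symmetric terms), obtained from the mean value theorem together with $e^{z|\xi|}\le e^{z|\eta|}e^{z|\xi-\eta|}$. With this, the commutator is controlled by one derivative split as a half derivative on each factor: it is bounded by $\|a\|_{z,s}$ times $\|h\|_{z,s+1/2}\|a\|_{z,s+1/2}$ or $\|h\|_{z,s}\|a\|_{z,s+1/2}^2$-type quantities. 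The factor $z$ is harmless since $z$ stays in a bounded range. The same treatment applies to $\frac12\Lambda((\partial_v h)a)$: we move a half derivative onto $a$ using the pairing bound $|\langle f,g\rangle_{z,s}|\lesssim\|f\|_{z,s-1/2}\|g\|_{z,s+1/2}$ from Lemma \ref{lem:analyticspace}, and then use the product estimate with $r=s+1/2$.

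Finally, each $\|\cdot\|_{z,s+1/2}^2$ is replaced by $\|\cdot\|_{z,s}^2+\||\partial_v|^{1/2}\cdot\|_{z,s}^2\lesssim G+\partial_zG$, again by the last inequality of Lemma \ref{lem:analyticspace} and the formula for $\partial_zG$. This yields the bound $\frac{\Theta^{1+\delta}}{t^2}G^{1/2}(G+\partial_zG)$.

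The main obstacle is the commutator estimate. The difference of the analytic weights must be bounded so that the total derivative falls as exactly one half on each of two factors, and never a full derivative on a single factor. Otherwise the resulting quantity is not controlled by $\partial_zG$. I would handle this by splitting into the frequency regions $|\xi-\eta|\le|\eta|/2$ and its complement, and applying the mean value theorem in the first region.
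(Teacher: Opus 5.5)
\textbf{There is a gap in the commutator step.} Your handling of the dispersive term, the cubic term and the integration-by-parts piece $h\,\partial_v\Lambda a$ is fine; the last uses that $h$ is real and $\|\partial_v h\|_{L^\infty}\lesssim\|h\|_{z,s+1/2}$. The $(s-\tfrac12,s+\tfrac12)$ pairing for $(\partial_v h)a$ is also fine, though the product bound needed there is the algebra property of $\mathcal{G}^{z,s-1/2}$.

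The problem is the commutator $[\Lambda,h]\partial_v a$. You call it the main obstacle but only sketch it, and the sketch does not give the stated inequality for all $s>1$.

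In the region $|\xi-\eta|\le|\eta|/2$, the mean value theorem on the exponential part of the symbol gives only $|e^{z|\xi|}-e^{z|\eta|}|\lesssim z|\xi-\eta|\,e^{z|\xi-\eta|}e^{z|\eta|}$. There is no compensating factor $\langle\eta\rangle^{-1}$. So that piece has the size of $z\langle(\partial_v h)\,|\partial_v|\Lambda a,\Lambda a\rangle_{L^2}$. A full derivative now falls on $h$, while the full derivative on $a$ survives and must be split as $\||\partial_v|^{1/2}a\|_{z,s}^2$.

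To bound this by $G^{1/2}\partial_zG$ you need $\int|\zeta|e^{z|\zeta|}|\widehat h(\zeta)|\,d\zeta\lesssim\|h\|_{z,s}$, which holds only for $s>3/2$. For $1<s\le3/2$ you can only use $\|h\|_{z,s+1/2}$. That gives $\|h\|_{z,s+1/2}\||\partial_v|^{1/2}a\|_{z,s}^2\lesssim(G+\partial_zG)^{3/2}$. This is not of the stated form, and it is useless in Proposition~\ref{prop:bootstrap}, where $\partial_zG$ must enter linearly with an $O(G^{1/2})$ coefficient. Your bound also picks up a factor $1+z$, whereas the paper's constant is uniform in $z\ge0$. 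For $s>3/2$ your route does close, with a constant depending on an upper bound for $z$.

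\textbf{Why the commutator does not help.} For the analytic weight, $[e^{z|\partial_v|},h]$ is not of lower order. The half-derivative loss on each of two factors, which you try to engineer through the commutator, is already allowed by the right-hand side.

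The paper uses no commutator. It bounds $|\Re\langle h\partial_v a+\tfrac12(\partial_v h)a,a\rangle_{z,s}|\le\|h\partial_v a+\tfrac12(\partial_v h)a\|_{z,s-1/2}\|a\|_{z,s+1/2}$. Since $\mathcal{G}^{z,s-1/2}$ is an algebra for $s>1$, this is at most $\|h\|_{z,s-1/2}\|a\|_{z,s+1/2}^2+\|h\|_{z,s+1/2}\|a\|_{z,s-1/2}\|a\|_{z,s+1/2}\lesssim G^{1/2}(G+\partial_zG)$, with constants independent of $z$. This is exactly the pairing you already use for $(\partial_v h)a$, applied to $h\partial_v a$ as well.

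Within your frequency splitting, the repair is to replace the mean value bound in the low-frequency-$h$ region by the trivial bound $|e^{z|\xi|}\langle\xi\rangle^s-e^{z|\eta|}\langle\eta\rangle^s|\le e^{z|\xi|}\langle\xi\rangle^s+e^{z|\eta|}\langle\eta\rangle^s$. That amounts to not commuting at all.
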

\begin{proof}
	We compute that
	\begin{equation}\label{identity:energy-a-1}
		\frac{1}{2} \frac{d}{dt} \| a(t) \|_{z,s}^2 
		= \Im \langle i\partial_t a(t) , a(t) \rangle_{z,s}.
	\end{equation}
	Using the equation \eqref{eqn:a-2} for $a(t)$, we get
	\begin{equation}\label{identity:energy-a-2}
		\Im \langle i\partial_t a(t), a(t) \rangle_{z,s}
		= 
		\frac{\Theta(t)^{1+\delta}}{t^2}\Re \langle h\partial_v a + \frac{1}{2} (\partial_v h ) a , a \rangle_{z,s}
		+ \frac{\Theta(t)^{2+2\delta}}{2t^2} \Im \langle h^2 a , a \rangle_{z,s},
	\end{equation}
	noting that $\Im \langle \partial_v^2 a, a \rangle_{z,s}=0$. Using Lemma \ref{lem:analyticspace}, we get
	\begin{align*}
		\left| \Re \langle h\partial_v a + \frac{1}{2} (\partial_v h)a , a \rangle_{z,s}\right|
		\lesssim 
		\left\| h\partial_v a + \frac{1}{2} (\partial_v h) a \right\|_{z,s-1/2}  \| a \|_{z,s+1/2}.
	\end{align*}
	Since $s>1$, the analytic function space $\mathcal{G}^{z,s-1/2}(\mathbb{R})$ is an algebra. Hence,
	\[
		\left\| h\partial_v a + \frac{1}{2} (\partial_v h) a \right\|_{z,s-1/2}
		\lesssim \| h \|_{z,s-1/2} \| a \|_{z,s+1/2} + \| h \|_{z,s+1/2} \| a \|_{z,s-1/2}.
	\]
	We thus obtain
	\begin{align*}
	\left| \Re \langle h\partial_v a + \frac{1}{2} (\partial_v h)a , a \rangle_{z,s}\right|
	&\lesssim 
    \| h \|_{z,s-1/2} \| a \|_{z,s+1/2}^2 
	+ \| h \|_{z,s+1/2} \| a \|_{z,s-1/2} \| a \|_{z,s+1/2}\\
	&\lesssim
	\left( \| h \|_{z,s-1/2} + \| a \|_{z,s-1/2} \right)
	\left( \| h \|_{z,s+1/2}^2 + \| a \|_{z,s+1/2}^2 \right).
	\end{align*}
	From the definition of the generator function, we have
	\[
	\| a\|_{z,s-1/2} + \| h \|_{z,s-1/2} \lesssim G(t,z)^{1/2}.
	\]
    Using
    \[
        \| f\|_{z,s+1/2}^2 \lesssim \| f \|_{z, s}^2 + \| |\partial_v|^{1/2} f \|_{z,s}^2,
    \]
    we obtain that
    \[
    	\| h \|_{z,s+1/2}^2 + \| a \|_{z,s+1/2}^2 \lesssim G(t,z) + \partial_z G(t,z).
    \]
    Combining these bounds, we get
    \begin{equation}\label{est:energy-a-1}
		\left| \Re \langle h\partial_v a + \frac{1}{2} (\partial_v h)a , a \rangle_{z,s}\right|
		\lesssim 
		G(t,z)^{1/2} \left( G(t,z) + \partial_z G(t,z) \right)
	\end{equation}
	Similarly, we bound
	\begin{equation}\label{est:energy-a-2}
		\left| \Im \langle h^2 a, a \rangle_{z,s} \right|
		\lesssim 
		\| h^2 a \|_{z,s} \| a \|_{z,s} 
		\lesssim 
		\| h\|_{z,s}^2 \| a \|_{z,s}^2
		\lesssim G(t,z)^2.
	\end{equation}
	Combining \eqref{identity:energy-a-2}, \eqref{est:energy-a-1}, and \eqref{est:energy-a-2}, we obtain from \eqref{identity:energy-a-1} that
	\[
	\frac{d}{dt} \| a(t) \|_{z,s}^2
	\lesssim \frac{\Theta(t)^{1+\delta}}{t^2} G(t,z)^{1/2} \left(G(t,z) + \partial_z G(t,z) \right) + \frac{\Theta(t)^{2+2\delta}}{t^2} G(t,z)^2.
	\]
	This completes the proof of Proposition \ref{prop:energy-a}.
\end{proof}

Next, we perform energy estimates for $h(t)$.
\begin{proposition}\label{prop:energy-h}
    Let $z\ge 0$ and $s>1$. Then we have
	\begin{equation}
		\frac{d}{dt} \| h(t) \|_{z,s}^2 + 2(1+\delta)\frac{\Theta'(t)}{\Theta(t)} \| h(t) \|_{z,s}^2 \lesssim \frac{1}{t\log^{1+2\delta}(1+t)} G(t,z)^{1/2} \left(G(t,z) +\partial_z G(t,z) \right)
	\end{equation}
    for $1\le t\le T$.
\end{proposition}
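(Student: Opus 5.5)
We take the $\mathcal{G}^{z,s}$ inner product of \eqref{eqn:phi-2} with $h$. Since $\partial_t\|h\|_{z,s}^2 = 2\Re\langle \partial_t h,h\rangle_{z,s}$, we obtain
\[
\frac12\frac{d}{dt}\|h\|_{z,s}^2+(1+\delta)\frac{\Theta'}{\Theta}\|h\|_{z,s}^2
=\frac{\mu}{2\pi}\Theta(t)^{-1-\delta}\Re\big\langle \partial_v(W_t\ast|a|^2),h\big\rangle_{z,s}.
\]
The damping term is already exactly as in the statement, so the whole task is to bound the right-hand side. The target is $t^{-1}\log^{-1-2\delta}(1+t)\,G^{1/2}(G+\partial_zG)$.

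Recall $\Theta^{-1-\delta}=\log^{-2-2\delta}(1+t)$. It therefore suffices to show
\[
\big|\langle \partial_v(W_t\ast|a|^2),h\rangle_{z,s}\big|\lesssim \frac{\log(2+t)}{t}\,G^{1/2}(G+\partial_zG).
\]
To distribute the derivative, we use the half-derivative pairing of Lemma~\ref{lem:analyticspace}. This gives
\[
|\langle \partial_v(W_t\ast|a|^2),h\rangle_{z,s}|\le \|\partial_v(W_t\ast|a|^2)\|_{z,s-1/2}\|h\|_{z,s+1/2}.
\]
The factor $\|h\|_{z,s+1/2}$ is controlled by $(G+\partial_zG)^{1/2}$ via the last bound of Lemma~\ref{lem:analyticspace}. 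For the other factor, convolution is a Fourier multiplier by $\widehat{W_t}$, which commutes with $e^{z|\partial_v|}\langle\partial_v\rangle^{s-1/2}$. It therefore suffices to bound $\|\widehat{W_t}\|_{L^\infty}$. We have
\[
\|\widehat{W_t}\|_{L^\infty}\le \|W_t\mathbf 1_{|v|\le1}\|_{L^1}+\|\widehat{W_t\mathbf 1_{|v|>1}}\|_{L^\infty}.
\]
The first piece is $\lesssim t^{-1}\log(2+t)$, as computed in the introduction.

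The second piece is the \emph{main obstacle}, since $W_t\mathbf 1_{|v|>1}\notin L^1$. We handle it directly on the Fourier side. Note $\widehat{W_t}(\eta)=\frac{2}{t}K_0(|\eta|/t)$, where $K_0$ is the modified Bessel function. This is unbounded only logarithmically at $\eta=0$, namely $\widehat{W_t}(\eta)\approx \frac{2}{t}\log(t/|\eta|)$ for small $|\eta|$. We use the derivative $\partial_v$ to absorb this: $|\eta|\,|\widehat{W_t}(\eta)|\lesssim t^{-1}\log(2+t)\langle\eta\rangle$ uniformly. Indeed, for $|\eta|\le1$ we have $|\eta|\log(t/|\eta|)\lesssim \log(2+t)$, and for $|\eta|\ge1$ the decay of $K_0$ gives the bound.

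Hence
\[
\|\partial_v(W_t\ast|a|^2)\|_{z,s-1/2}\lesssim \frac{\log(2+t)}{t}\,\||a|^2\|_{z,s+1/2}.
\]
Applying the asymmetric product estimate of Lemma~\ref{lem:analyticspace} with $r=s+1/2$, and using $\|\bar a\|_{z,r}=\|a\|_{z,r}$, gives
\[
\||a|^2\|_{z,s+1/2}\lesssim\|a\|_{z,s}\|a\|_{z,s+1/2}\lesssim G^{1/2}(G+\partial_zG)^{1/2}.
\]
Combining the three bounds yields
\[
\frac{\log(2+t)}{t}\,G^{1/2}\big(G+\partial_zG\big)^{1/2}\|h\|_{z,s+1/2}\lesssim\frac{\log(2+t)}{t}\,G^{1/2}(G+\partial_zG).
\]
Multiplying by $\Theta^{-1-\delta}$ produces $t^{-1}\log^{-1-2\delta}(1+t)$, using $\log(2+t)\lesssim\log(1+t)$ for $t\ge1$. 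This proves the proposition.

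If the Bessel representation feels delicate, an alternative is to put $\partial_v$ on the kernel in the far region. We have $|\partial_vW_t(v)|\lesssim t^{-1}|v|^{-2}$ for $|v|>1$, and this is integrable. The truncation boundary terms at $|v|=1$ are bounded by $t^{-1}$ times $\||a|^2\|_{z,s-1/2}$. Either route gives the same bound.
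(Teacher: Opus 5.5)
Your argument is correct and is essentially the paper's proof. The paper also uses $\widehat{W_t}(\eta)=\frac{2}{t}K_0(|\eta|/t)$ to get $|\eta|\,|\widehat{W_t}(\eta)|\lesssim t^{-1}\log(1+t)\langle\eta\rangle$, puts half a derivative on each of $|a|^2$ and $h$, and closes with $\||a|^2\|_{z,s+1/2}\lesssim\|a\|_{z,s}\|a\|_{z,s+1/2}$ and the factor $\Theta^{-1-\delta}$. One correction: drop the remark that it suffices to bound $\|\widehat{W_t}\|_{L^\infty}$, since that quantity is infinite ($\widehat{W_t}$ is logarithmically unbounded at $\eta=0$); your actual chain of estimates instead correctly exploits the factor $|\eta|$, and your elementary alternative (putting $\partial_v$ on the far kernel and bounding the point masses at $|v|=1$) is also valid.
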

\begin{proof}
	We compute
	\begin{equation}\label{identity:energy-h-1}
		\frac{1}{2} \frac{d}{dt}
		\| h(t) \|_{z,s}^2
		=
		\Re \langle \partial_t h(t) , h(t) \rangle_{z,s}.
	\end{equation}
	Using the equation \eqref{eqn:phi-2} for $h$, we get
	\begin{equation}\label{identity:energy-h-2}
	\begin{aligned}
		\Re \langle \partial_t h(t) , h(t) \rangle_{z,s}
		= -(1+\delta)\frac{\Theta'(t)}{\Theta(t)}\| h(t)\|_{z,s}^2
		+  \frac{\mu}{2\pi} \Theta(t)^{-1-\delta} \Re \langle   \partial_v (W_t \ast_v |a(t)|^2) , h(t) \rangle_{z, s}.
	\end{aligned}
	\end{equation}
	It remains to control $\Re \langle \partial_v (W_t \ast_v |a(t)|^2),  h(t) \rangle_{z,s}$. Note that
	\[
		\widehat{W}_t (\eta) = \frac{2}{t} K_0 \left( \frac{|\eta|}{t} \right),
	\]
	and $K_0$ is the modified Bessel function of the second kind. It is known that
	\[
		K_0(r)
		\lesssim
		\begin{cases}
			1+|\log r|, 	&0 < r \le 1, \\
			e^{-r} r^{-1/2}, \quad	& r \ge 1.
		\end{cases}
	\]
	These bounds imply that
	\[
		|\eta| |\widehat{W}_t (\eta)| 
		\lesssim 
		t^{-1} \log(1+t) \langle \eta \rangle, \qquad t \ge 1.
	\]
	Using this bound and Lemma \ref{lem:analyticspace}, we obtain that
	\begin{equation*}
	\begin{aligned}
		\left|\langle \partial_v (W_t \ast_v |a(t)|^2), h(t) \rangle_{z,s} \right|
		&\lesssim
		\left| \int e^{2z|\eta|} \langle \eta \rangle^{2s} \eta \widehat{W}_t(\eta) \widehat{|a|^2}(\eta) \overline{\widehat{h}(\eta)} \, d\eta  \right| \\
		&\lesssim
		\frac{\log (1+t)}{t}  \int e^{2z|\eta|} \langle \eta \rangle^{2s+1}  \left|\widehat{|a|^2}(\eta) \right| \left|\overline{\widehat{h}}(\eta) \right| \, d\eta \\
		&\lesssim 
		\frac{\log (1+t)}{t} \left\| |a(t)|^2 \right\|_{z,s+1/2} \| h(t) \|_{z,s+1/2}.
	\end{aligned}
	\end{equation*}
	Now we may bound
	\begin{align*}
		\left\| |a(t)|^2 \right\|_{z,s+1/2} \| h(t) \|_{z,s+1/2}
		&\lesssim 
		\| a(t) \|_{z,s+1/2} \| a(t) \|_{z,s} \| h(t) \|_{z,s+1/2} \\
		&\lesssim 
		\| a(t) \|_{z,s} \left( \|a(t) \|_{z,s+1/2}^2 + \| h (t) \|_{z,s+1/2}^2 \right) \\
		&\lesssim
		G(t,z)^{1/2} \left(G(t,z) + \partial_z G(t,z) \right).
	\end{align*}
	We thus obtain that
	\begin{equation}\label{est:energy-h-1}
		\left| \langle \partial_v (W_t \ast |a(t)|^2) , h(t) \rangle_{z,s} \right| 
		\lesssim \frac{\log (1+t)}{t} G(t,z)^{1/2} \left(G(t,z) + \partial_z G(t,z) \right).
	\end{equation}
	Combining \eqref{identity:energy-h-1}, \eqref{identity:energy-h-2}, and \eqref{est:energy-h-1}, we get
	\[
		\frac{d}{dt} \| h(t) \|_{z,s}^2 + 2(1+\delta)\frac{\Theta'(t)}{\Theta(t)} \| h(t) \|_{z,s}^2 \lesssim \frac{1}{t\log^{1+2\delta}(1+t)} G(t,z)^{1/2} \left(G(t,z) +\partial_z G(t,z) \right)
	\] 
	which completes the proof of Proposition \ref{prop:energy-h}.
\end{proof}

Combining Propositions \ref{prop:energy-a} and \ref{prop:energy-h}, we obtain the following proposition.
We denote
\[
	\beta_1(t) = \frac{1}{t\log^{1+2\delta}(1+t)}, \qquad \beta_2(t) = \frac{\log^{4+4\delta} (1+t)}{t^2}
\]
for notational convenience.
\begin{proposition}\label{prop:energy-combined}
	There exists a constant $C_0= C_0(s,\delta,\mu) >0$ such that
	\begin{equation}\label{est:energy-combined}
		\frac{d}{dt} G(t,z) 
		\le
		C_0\beta_1(t) G(t,z)^{1/2} \left( G(t,z) + \partial_z G(t,z) \right) 
		+ C_0 \beta_2(t)  G(t,z)^2
	\end{equation}
	for $1 \le t \le T$ and $z \ge 0$.
\end{proposition}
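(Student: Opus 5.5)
The plan is to add the two inequalities of Propositions~\ref{prop:energy-a} and~\ref{prop:energy-h}, using $G(t,z)=\|a(t)\|_{z,s}^2+\|h(t)\|_{z,s}^2$, so that
\[
\frac{d}{dt}G(t,z)=\frac{d}{dt}\|a(t)\|_{z,s}^2+\frac{d}{dt}\|h(t)\|_{z,s}^2 .
\]
First I will discard the damping term in Proposition~\ref{prop:energy-h}. Since $\Theta(t)=\log^2(1+t)$ is positive and increasing for $t\ge1$, the quantity $2(1+\delta)\frac{\Theta'(t)}{\Theta(t)}\|h(t)\|_{z,s}^2$ is nonnegative. It can therefore be moved to the right-hand side and dropped. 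This gives $\frac{d}{dt}\|h\|_{z,s}^2\lesssim \beta_1(t)G^{1/2}(G+\partial_zG)$.

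Next I will convert the coefficients in Proposition~\ref{prop:energy-a} into $\beta_1$ and $\beta_2$. The quadratic-in-$h$ term has coefficient $\Theta(t)^{2+2\delta}/t^2=\log^{4+4\delta}(1+t)/t^2=\beta_2(t)$ exactly. For the derivative-loss term the coefficient is $\Theta(t)^{1+\delta}/t^2=\log^{2+2\delta}(1+t)/t^2$. I need to compare it with $\beta_1(t)=1/(t\log^{1+2\delta}(1+t))$, and the ratio is
\[
\frac{\Theta(t)^{1+\delta}/t^2}{\beta_1(t)}=\frac{\log^{3+4\delta}(1+t)}{t}.
\]
This ratio is bounded on $[1,\infty)$ by a constant depending only on $\delta$, because $\log^{k}(1+t)/t\to0$ as $t\to\infty$ and the ratio is continuous. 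Hence $\Theta^{1+\delta}/t^2\le C_\delta\,\beta_1(t)$.

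Finally, I will sum the two bounds and collect the implicit constants into a single $C_0$. Those implicit constants come from Lemma~\ref{lem:analyticspace} (depending on $s$), from the Bessel bound on $\widehat W_t$, from the factor $|\mu|/2\pi$, and from $C_\delta$. All of them depend only on $(s,\delta,\mu)$ and are uniform in $z\ge0$ and $t\in[1,T]$.

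There is no real obstacle here. The only points to check are that the damping term has a favorable sign and that the polylogarithmic factor $\log^{3+4\delta}(1+t)/t$ is uniformly bounded. This is also where the normalization by $\Theta^{-1-\delta}$ in the definition of $h$ pays off: it makes $\beta_1$ integrable in time, which the later bootstrap with a decreasing radius $\lambda(t)$ will use.
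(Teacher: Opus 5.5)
Your proposal is correct and follows the paper's proof essentially verbatim. You sum Propositions~\ref{prop:energy-a} and~\ref{prop:energy-h}, drop the nonnegative damping term $2(1+\delta)\frac{\Theta'}{\Theta}\|h\|_{z,s}^2$, and absorb $\Theta^{1+\delta}/t^2=\log^{2+2\delta}(1+t)/t^2$ into $\beta_1$ via the bounded ratio $\log^{3+4\delta}(1+t)/t$, with the $G^2$ coefficient equal to $\beta_2$ exactly.
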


\begin{proof}
	From Propositions \ref{prop:energy-a} and \ref{prop:energy-h}, we have
	\begin{align*}
		&\frac{d}{dt} G(t,z) + 2(1+\delta) \frac{\Theta'(t)}{\Theta(t)} \| h(t) \|_{z,s}^2 \\
		&\lesssim 
		\left(\frac{\log^{2+2\delta}(1+t)}{t^2} + \frac{1}{t \log^{1+2\delta}(1+t)} \right) G(t,z)^{1/2} \left(G(t,z) + \partial_z G(t,z) \right)
		+ \frac{\log^{4+4\delta} (1+t)}{t^2 } G(t,z)^2.
	\end{align*}
	Since
	\[
		2(1+\delta) \frac{\Theta'(t)}{\Theta(t)} \| h(t) \|_{z,s}^2 \ge 0, \qquad \textrm{and} \qquad 
		\frac{\log^{2+2\delta}(1+t)}{t^2} \lesssim \frac{1}{t \log^{1+2\delta} (1+t)},
	\]
	we conclude that \eqref{est:energy-combined} holds. This completes the proof of Proposition \ref{prop:energy-combined}.
\end{proof}

\subsection{Analyticity radius and bootstrap proposition}\label{sec:bootstrap}
We now use Proposition \ref{prop:energy-combined} to choose the time-dependent analyticity radius.
Recall that $\delta >0$ and $\lambda_1 >0$ are given. We remark that both $\beta_1$ and $\beta_2$ are integrable over $t\ge 1$. Let $C_0 >0$ be the constant in the Proposition \ref{prop:energy-combined}, and choose $K>4C_0$. Then choose $\epsilon>0$ such that  $K\epsilon \| \beta_1 \|_{L^1(1,\infty)} \le \lambda_1/2$. Then the time-dependent and decreasing analyticity radius
\[
	\lambda(t) := \lambda_1 - K\epsilon \int_1^t \beta_1 (s) \, ds, \qquad t \ge 1
\]
satisfies $\lambda(1) = \lambda_1$ and $\lambda(t) \ge \lambda_1 /2 $ for all $t \ge 1$.
Now we state the bootstrap proposition.

\begin{proposition}\label{prop:bootstrap}
	Fix $\lambda_1 >0$. Let $C_0 >0$ be the constant in the Proposition \ref{prop:energy-combined}. Let $\epsilon>0$ be sufficiently small and suppose that $G(1,\lambda_1)^{1/2} \le \epsilon$.
	Assume that
	\begin{equation}\label{bootstrap-assumption}
		\sup_{t \in [1,T]} G(t,\lambda(t))^{1/2} \le 4 \epsilon.
	\end{equation}
	for some $T \ge 1$. Then the improved estimate
	\begin{equation}\label{bootstrap-improved}
		\sup_{t \in [1,T]} G(t,\lambda(t))^{1/2} \le 2\epsilon
	\end{equation}
	holds.
\end{proposition}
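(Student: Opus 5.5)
The idea is to differentiate the composite quantity $g(t):=G(t,\lambda(t))$ in time and use the decrease of the radius to absorb the derivative-loss term. By the chain rule, Proposition~\ref{prop:energy-combined} evaluated at $z=\lambda(t)$, and $\lambda'(t)=-K\epsilon\beta_1(t)$, we get
\begin{equation*}
\frac{d}{dt}g(t) = \partial_tG(t,\lambda(t)) + \lambda'(t)\,\partial_zG(t,\lambda(t))
\le C_0\beta_1 g^{1/2}\bigl(g+\partial_zG\bigr) + C_0\beta_2 g^2 - K\epsilon\beta_1\,\partial_zG(t,\lambda(t)).
\end{equation*}
This step needs the regularity of $G$ in both variables at $(t,\lambda(t))$. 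I would justify it by assuming the solution is regular enough, e.g.\ finite in a slightly larger radius via propagation from Lemma~\ref{lem:LWP}. Alternatively, one can argue in integrated form $g(t)-g(1)=\int_1^t\frac{d}{d\tau}G(\tau,\lambda(\tau))\,d\tau$ using the monotonicity of $G$ in $z$.

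Under the bootstrap assumption \eqref{bootstrap-assumption} we have $g^{1/2}\le4\epsilon$. The $\partial_zG$ contribution is then bounded by $(4C_0\epsilon-K\epsilon)\beta_1\partial_zG\le0$, since $K>4C_0$ and $\partial_zG\ge0$. So it can be dropped; this is the key absorption step, and the reason $K$ was chosen larger than $4C_0$. What remains is
\begin{equation*}
\frac{d}{dt}g \le C_0\beta_1 g^{3/2} + C_0\beta_2 g^2 \le \bigl(4C_0\epsilon\beta_1 + 16C_0\epsilon^2\beta_2\bigr)g .
\end{equation*}

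Gronwall then gives
\begin{equation*}
g(t)\le g(1)\exp\Bigl(4C_0\epsilon\|\beta_1\|_{L^1(1,\infty)}+16C_0\epsilon^2\|\beta_2\|_{L^1(1,\infty)}\Bigr).
\end{equation*}
Both integrals are finite because $\delta>0$ makes $\beta_1$ integrable and $\beta_2$ is trivially integrable. We also have $g(1)=G(1,\lambda_1)\le\epsilon^2$. Choosing $\epsilon$ small enough that the exponential is at most $4$, we get $g(t)^{1/2}\le2\epsilon$ on $[1,T]$, which is \eqref{bootstrap-improved}. This smallness must stay compatible with the earlier requirement $K\epsilon\|\beta_1\|_{L^1}\le\lambda_1/2$, which is automatic since both are smallness conditions on $\epsilon$ with $K$ fixed first.

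The only delicate point is the justification of the chain rule for $G(t,\lambda(t))$, i.e.\ ensuring $\partial_zG(t,\lambda(t))$ is finite. I would handle this by first working with a truncated or mollified version, such as frequency-truncated norms for which everything is finite. Then I would pass to the limit using monotone convergence, since the Fourier weights are monotone in the truncation parameter.
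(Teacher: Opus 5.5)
Your formal mechanism is the same as the paper's. The term $\lambda'(t)\,\partial_zG$ produced by the shrinking radius absorbs $C_0\beta_1G^{1/2}\partial_zG$ because $K>4C_0$, and integrability of $\beta_1,\beta_2$ then closes the bound; your Gronwall step is fine. The gap is at the step you flag yourself, and none of your proposed fixes closes it.

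Under \eqref{bootstrap-assumption} you only know $G(t,\lambda(t))<\infty$. The quantity $\partial_zG(t,\lambda(t))=2\||\partial_v|^{1/2}a\|_{\lambda(t),s}^2+2\||\partial_v|^{1/2}h\|_{\lambda(t),s}^2$ may be $+\infty$, e.g.\ if $|\widehat a(\eta)|^2\sim e^{-2\lambda|\eta|}\langle\eta\rangle^{-2s-3/2}$. In that case your differential inequality reads $\infty-\infty$. Your ``integrated form'' presupposes the absolute continuity of $g$, which is exactly what is in question.

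Truncating the norm in frequency does not help either. In the identity for $\frac{d}{dt}\|P_{\le N}a\|_{z,s}^2$ the nonlinearity $h\partial_va+\frac12(\partial_vh)a$ is not truncated. So the bilinear bound of Proposition~\ref{prop:energy-a} still produces the full $\|a\|_{z,s+1/2}$ and $\|h\|_{z,s+1/2}$, i.e.\ the full $\partial_zG$, on the right-hand side. The truncated damping $-K\epsilon\beta_1\partial_zG_N$ cannot absorb that. Nor does Lemma~\ref{lem:LWP} supply a larger radius: it works only on a fixed finite interval $[0,T_0]$, with smallness depending on $T_0$, not on an arbitrary bootstrap interval $[1,T]$.

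The missing idea is to work strictly inside the analyticity strip, where $\partial_zG$ is automatically finite. The paper does this as follows:
\begin{enumerate}
\item It sets $\widetilde G(t,z)=G(t,\lambda(t)z)$ and uses $\partial_z\widetilde G(t,z)\lesssim(1-z)^{-1}\widetilde G(t,1)$ for $z<1$ to make \eqref{est:tilde-G} legitimate.
\item It integrates along backward characteristics $z'=-B(\tau,z)$. These stay in $(0,1)$ because $B(\tau,0)\ge0>B(\tau,1)$.
\item It finally lets $z_*\to1$.
\end{enumerate}
In the original variables, the characteristic radius $\rho=\lambda z$ satisfies $\rho'=-C_0\beta_1G(\tau,\lambda(\tau))^{1/2}$, so it shrinks strictly more slowly than $\lambda$.

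Your argument can be repaired in the same spirit with less machinery. Since $K>4C_0$ strictly, fix $\theta\in(4C_0/K,1)$ and run your computation for $g_\theta(t)=G(t,\theta\lambda(t))$.
\begin{itemize}
\item Now $\partial_zG(t,\theta\lambda(t))\lesssim((1-\theta)\lambda(t))^{-1}G(t,\lambda(t))<\infty$, so the chain rule is legitimate.
\item The damping coefficient $\theta K\epsilon\beta_1\ge4C_0\epsilon\beta_1$ still dominates, because $G(t,\theta\lambda(t))\le G(t,\lambda(t))\le16\epsilon^2$.
\item Gronwall gives $g_\theta(t)\le4\epsilon^2$ uniformly in $\theta$, since $g_\theta(1)\le G(1,\lambda_1)\le\epsilon^2$.
\item Monotone convergence as $\theta\uparrow1$ then yields \eqref{bootstrap-improved}.
\end{itemize}
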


\begin{proof}
Let
\[
	\widetilde{G}(t,z) = G(t,\lambda(t)z), \qquad 0 \le z \le 1.
\]
We note that
\[
	\partial_t \widetilde{G}(t,z) = \partial_t G(t,\lambda(t) z) + \partial_z G(t,\lambda(t)z) \, \lambda'(t) z,
\]
and
\[
	\partial_z \widetilde{G}(t,z) =\partial_z G(t,\lambda(t)z) \, \lambda(t).
\]
For $z \in [0,1]$, we apply Proposition \ref{prop:energy-combined} to get
\[
	\partial_t \widetilde{G}(t,z) - \frac{\lambda'(t) z + C_0 \beta_1(t) \widetilde{G}(t,z)^{1/2}}{\lambda(t)}\partial_z \widetilde{G}(t,z) 
	\le
	C_0\beta_1(t) \widetilde{G}(t,z)^{3/2} 
	+
	C_0 \beta_2(t) \widetilde{G}(t,z)^2.
\]
Since $\widetilde{G}(t,z)$ is increasing in $z$, we obtain that
\begin{equation}\label{est:tilde-G}
	\partial_t \widetilde{G}(t,z) - B(t,z) \partial_z \widetilde{G}(t,z) 
	\le 
	C_0 \beta_1 (t) \widetilde{G}(t,z)^{3/2}
	+
	C_0 \beta_2(t) \widetilde{G}(t,z)^2,
\end{equation}
where
\[
	B(t,z) = \frac{\lambda'(t) z + C_0 \beta_1(t) \widetilde{G}(t,1)^{1/2}}{\lambda(t)}.
\]

From the choice of $\lambda(t)$ and bootstrap assumption, we have
\[
	B(t,0) = \frac{C_0 \beta_1(t) \widetilde{G}(t,1)^{1/2}}{\lambda(t)} \ge 0
\]
and
\[
	B(t,1) = \frac{\lambda'(t) + C_0 \beta_1(t) G(t,\lambda(t))^{1/2}}{\lambda(t)} \le \frac{(-K +4C_0)\epsilon \beta_1(t)}{\lambda(t)} <0
\]
for $K> 4C_0$.

For each $1 \le t \le T$ and $z_* \in (0,1)$, consider a backward characteristic determined by
\[
	z'(\tau) = - B(\tau ,z(\tau)), \qquad z(t) = z_*.
\]
Then $0 < z(\tau) <1$ for all $1 \le \tau \le t$. Since $z$ is continuous, there exists $m <1$ such that $z(\tau) \le m <1$ for all $1 \le \tau \le t$. Note that, for $0< z < 1$, we have
\[
	\partial_z \widetilde{G}(t,z) 
	= \frac{\lambda(t)}{\pi} \int |\eta| e^{2\lambda(t) z|\eta|} \langle \eta \rangle^{2s} \left( |\widehat{a}(t, \eta)|^2 + |\widehat{h}(t, \eta)|^2 \right) \, d\eta.
\]
Using
\[
	\lambda(t) |\eta| e^{-2\lambda(t)(1-z)|\eta|} \lesssim \frac{1}{1-z},
\]
we obtain that
\[
	\partial_z \widetilde{G}(t, z) \lesssim \frac{1}{1-z} \widetilde{G}(t, 1) <\infty.
\]
Hence, \eqref{est:tilde-G} is justified for $z=z(\tau)$, which yields
\[
	\frac{d}{d\tau} \widetilde{G}(\tau,z(\tau)) 
	\le 
	C_0 \beta_1(\tau) \widetilde{G}(\tau,z(\tau))^{3/2}
	+
	C_0 \beta_2(\tau) \widetilde{G}(\tau,z(\tau))^2.
\]
Observe that
\[
	\widetilde{G}(1,z(1)) = G(1,\lambda(1)z(1)) \le G(1,\lambda_1) \le \epsilon^2
\]
from the assumption on the initial data. Also,
\[
	\widetilde{G}(\tau, z(\tau)) \le 16 \epsilon^2
\]
for $1 \le \tau \le t$ under the bootstrap assumption. Since $\beta_1$ and $\beta_2$ are integrable over $[1,\infty)$, there exist some positive $C_1$ and $C_2$ satisfying
\begin{align*}
	\widetilde{G}(t,z_*) 
	&\le  
	\widetilde{G}(1,z(1)) 
	+ \int_1^t \left( C_0\beta_1(\tau) \widetilde{G}(\tau,z(\tau))^{3/2} + C_0 \beta_2(\tau) \widetilde{G}(\tau,z(\tau))^2 \right) \, d\tau \\
    &\le \epsilon^2 + C_1 \epsilon^3 + C_2 \epsilon^4
\end{align*}
for $1 \le t \le T$. Choosing smaller $\epsilon>0$ if necessary, this is bounded by $4\epsilon^2$ uniformly in $z_* \in (0,1)$. Taking $z_* \rightarrow 1$ gives us
\[
	G(t,\lambda(t)) = \widetilde{G}(t,1) \le 4 \epsilon^2
\]
for $1 \le t \le T$. This completes the proof of Proposition \ref{prop:bootstrap}.
\end{proof}

We now close the bootstrap argument. By the local well-posedness theory, see Lemma \ref{lem:LWP}, the bootstrap assumption holds on $[1,T_0]$ for some $T_0>1$. Proposition \ref{prop:bootstrap} improves the bootstrap bound. The standard continuity argument implies that the solution exists globally in time and satisfies
\[
	\sup_{t \ge 1} \left( \| a(t) \|_{\lambda(t),s} + \| h(t) \|_{\lambda(t), s} \right) \lesssim \epsilon.
\]

\subsection{Dispersive decay bounds}
We now derive the pointwise decay estimates for the solution and the associated density.
Since $s>1/2$ and $\lambda(t) \ge 0$, Sobolev embedding yields
\[
	\| a(t) \|_{L^{\infty}_v} \lesssim \| a(t) \|_{H^s_v} \lesssim \| a(t) \|_{\lambda(t),s} \lesssim \epsilon.
\]
Recalling that
\begin{equation}\label{identity:u-decay-a}
	|u(t,x)|^2 = \frac{1}{2\pi t} |a(t,x/t)|^2,
\end{equation}
we obtain that
\[
	\|u(t) \|_{L^{\infty}_x} \lesssim t^{-1/2} \| a(t) \|_{L^{\infty}_v } \lesssim \epsilon t^{-1/2}
\]
uniformly for $t \ge 1$. Interpolating with mass conservation proves \eqref{est:optimaldecay}.

Next, we derive the time decay of spatial derivatives of $|u(t)|^2$. Let $m \ge 0$ be an integer. Writing $|\eta|^m =|\eta|^m e^{-\lambda(t)|\eta|} e^{\lambda(t)|\eta| }$, we bound
\begin{align*}
	\| \partial_v^m a(t) \|_{H^s}^2
	= \frac{1}{2\pi} \int |\eta|^{2m} \langle \eta \rangle^{2s} |\widehat{a}(t,\eta)|^2 \, d\eta
	\lesssim \left( \sup_{r \ge 0} r^{2m} e^{-2r\lambda(t)} \right) \| a (t) \|_{\lambda(t),s}^2 .
\end{align*}
Since $\lambda(t) \ge \lambda_1 /2$, this yields
\[
	\| \partial_v^m a(t) \|_{H^s}  
	\lesssim  m!\left( \frac{C}{\lambda_1} \right)^m \| a(t) \|_{\lambda(t),s}
	\lesssim \epsilon m! \left( \frac{C}{\lambda_1} \right)^m  .
\]
Using Leibniz rule and the fact that $H^s(\mathbb{R})$ is an algebra for $ s > 1/2$, we get
\begin{align*}
	\left\| \partial_v^m |a(t)|^2 \right\|_{L^1_v \cap L^{\infty}_v}
	\lesssim
	\sum_{j=0}^m \binom{m}{j} \| \partial_v^j a(t) \|_{H^s_v} \| \partial_v^{m-j} a(t) \|_{H^s_v} 
	\lesssim
	\epsilon^2 m! \left(\frac{C}{\lambda_1} \right)^m.
\end{align*}

From \eqref{identity:u-decay-a}, we have
\[
	\partial_x^m |u(t,x)|^2 = (2\pi)^{-1}t^{-m-1} \left(\partial_v^m |a(t,v)|^2 \right)\Big\rvert_{v=x/t},
\]
which implies
\[
    \| \partial_x^m |u(t)|^2 \|_{L^p_x} = (2\pi)^{-1} t^{-m-1 + 1/p} \| \partial_v^m |a(t)|^2 \|_{L^p_v}.
\]
Interpolating between $L^1$ and $L^{\infty}$, we conclude that
\[
	\left\| \partial_x^m |u(t)|^2 \right\|_{L^p_x}
	\lesssim \epsilon^2 m! \left(\frac{C}{\lambda_1} \right)^m t^{-m-1+1/p}
\]
for $1\le p\le \infty$, which could be viewed as \textit{phase mixing estimates} \eqref{est:phasemixing}.

\section{Modified scattering}\label{sec:modified-scattering}
In this section, we establish modified scattering and complete the proof of Theorem \ref{thm:main}.
Fix $0 < \lambda' < \lambda_1/2$ and set $\delta':= \lambda_1/2 - \lambda' >0$. Since $\lambda(t) \ge \lambda_1 /2 $ for $t\ge1$, we get $\lambda(t) - \lambda' \ge \delta'$ for $t \ge 1$. Following the argument in the previous section, we observe that
\[
	\| \partial_v^k f \|_{\lambda', s}^2 
	=
	\frac{1}{2\pi} \int |\eta|^{2k} e^{2\lambda'|\eta|} \langle \eta \rangle^{2s} |\widehat{f}(\eta)|^2 \, d\eta
	\lesssim \left( \sup_{r \ge 0} r^k e^{-\delta' r} \right)^2 \int e^{2\lambda(t)|\eta|} \langle \eta \rangle^{2s} |\widehat{f}(\eta)|^2 \, d\eta
	\lesssim
	\| f \|_{\lambda(t),s}^2
\]
for every fixed integer $k \ge 0$ and $f \in \mathcal{G}^{\lambda(t),s}$. Applying these bounds to \eqref{eqn:a-2}, we obtain that
\begin{align*}
	&\| \partial_t a(t)\|_{\lambda', s}\\
	&\lesssim 
	t^{-2} \| \partial_v^2 a (t) \|_{\lambda', s}
	+ t^{-2} \Theta(t)^{1+\delta} \left( \| h(t) \partial_v a(t) \|_{\lambda', s} + \| (\partial_v h(t)) a(t) \|_{\lambda', s} \right)
	+ t^{-2} \Theta(t)^{2+2\delta} \| h(t)^2 a(t) \|_{\lambda', s}\\
	&\lesssim 
	t^{-2} \| a(t) \|_{\lambda(t),s}
	+ t^{-2} \Theta(t)^{1+\delta} \left( \| h(t) \|_{\lambda', s} \| \partial_v a(t) \|_{\lambda', s} + \| \partial_v h(t) \|_{\lambda', s} \| a(t) \|_{\lambda',s} \right)
	+ t^{-2} \Theta(t)^{2+2\delta} \| h(t)\|_{\lambda',s}^2 \| a(t) \|_{\lambda', s} \\
	&\lesssim
	\epsilon t^{-2}
	+ \epsilon^2 t^{-2} \Theta(t)^{1+\delta}
	+ \epsilon^3 t^{-2} \Theta(t)^{2+2\delta}
\end{align*}
for $t \ge 1$. Consequently, for $t_2 \ge t_1 \ge 1$, we get
\[
	\| a(t_2) - a(t_1) \|_{\lambda', s} \le \int_{t_1}^{t_2} \| \partial_{\tau} a(\tau) \|_{\lambda', s}  \, d\tau,
\]
which proves that $a(t)$ is Cauchy in $\mathcal{G}^{\lambda', s}$. Denoting the limit as $t \rightarrow \infty$ by $a_{\infty}$, we obtain for $t \ge 1$ that
\begin{equation}\label{est:at-ainfty-01}
	\| a(t) - a_{\infty} \|_{\lambda', s}
	\lesssim \epsilon t^{-1} \Theta(t)^{2+2\delta}.
\end{equation}
Recalling 
\[
	a(t,v) = e^{i\Phi(t,v) } b(t,v), \qquad u(t,x) = (2\pi it)^{-1/2} e^{ix^2 /(2t)} b(t,x/t),
\]
we conclude that
\[
\left\| e^{i\Phi(t,v)} (2\pi it)^{1/2} e^{-itv^2/2} u(t,tv) - a_{\infty} (v) \right\|_{\lambda', s} \lesssim \epsilon t^{-1}\Theta(t)^{2+2\delta}
\]
uniformly for $t \ge 1$.
Let
\[
    R(t,v) = e^{i\Phi(t,v)} (2\pi it)^{1/2} e^{-itv^2/2} u(t,tv) - a_{\infty}(v),
\]
and
\[
    Q(t,x) = e^{i\Phi(t,x/t)}(2\pi it)^{1/2} e^{-ix^2/(2t)} u(t,x)- a_{\infty}(x/t).
\]
We proved in \eqref{est:at-ainfty-01} that
\[
    \| R(t) \|_{\lambda',s} \lesssim \epsilon t^{-1} \log^{4+4\delta} (1+t).
\]
Since $Q(t,x) = R(t,x/t)$, we obtain that
\[
    \|Q(t) \|_{L^p_x} = t^{1/p} \| R(t) \|_{L^p_v} \lesssim t^{1/p} \|R(t)\|_{\lambda',s} \lesssim \epsilon t^{-1+1/p} \log^{4+4\delta} (1+t)
\]
for $2 \le p \le \infty$.

It remains to identify the large time behavior of the phase $\Phi$.
We recall that
\[
 	\Phi(t,v) 
    = \frac{\mu}{2\pi}\int_1^t (W_\tau \ast_v |b(\tau)|^2)(v) \, d\tau 
    = \frac{\mu}{2\pi}\int_1^t (W_\tau \ast_v |a(\tau)|^2)(v) \, d\tau.
\]
We first establish the following asymptotic decomposition of $W_t \ast q$.
\begin{lemma}\label{lem:decomposition-potential}
	For $t \ge 1$ and $q \in L^1 (\mathbb{R}) \cap W^{1,\infty}(\mathbb{R})$, we have a decomposition
	\[
		W_t \ast q  = \frac{2\log t}{t} q + \frac{1}{t} F[q]+ \frac{1}{t} R_t[q],
	\]
	where
	\[
		F[q](v) := 2\log 2 \, q(v) + \int_{|z|\le1} \frac{q(v-z)-q(v)}{|z|} \, dz  + \int_{|z|>1} \frac{q(v-z)}{|z|} \, dz.
	\]
	Also, there holds
	\[
		\| F[q]\|_{L^{\infty}} \lesssim \| q \|_{L^1} + \| q\|_{W^{1,\infty}}
	\]
	and
	\[
		\| R_t [q] \|_{L^{\infty}} \lesssim t^{-1} \| \partial_v q \|_{L^{\infty}} + t^{-2} ( \|q \|_{L^1} + \| q \|_{L^{\infty}}) 
	\]
	uniformly for $t \ge 1$.
    If $q \in W^{N,1} \cap W^{N+1, \infty}$, the same decomposition holds after applying $\partial_v^m$, $0 \le m \le N$, with $\partial_v^m F[q] = F[\partial_v^m q]$ and $\partial_v^m R_t[q] = R_t [\partial_v^m q]$. The bounds then apply with $q$ replaced by $\partial_v^m q$.
\end{lemma}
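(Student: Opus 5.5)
We write $(W_t\ast q)(v)=\int_{\mathbb R}\langle tz\rangle^{-1}q(v-z)\,dz$ and split at $|z|\le 1$ and $|z|>1$. In the near region we add and subtract $q(v)$:
\[
\int_{|z|\le1}\frac{q(v-z)}{\langle tz\rangle}\,dz
= q(v)\int_{|z|\le1}\frac{dz}{\langle tz\rangle}
+\int_{|z|\le1}\frac{q(v-z)-q(v)}{\langle tz\rangle}\,dz .
\]
The first integral equals $\frac{2}{t}\operatorname{arsinh}(t)$. Since $\operatorname{arsinh}(t)=\log(t+\sqrt{1+t^2})=\log(2t)+O(t^{-2})$ for $t\ge1$, it is $\frac{2\log t+2\log2}{t}+O(t^{-3})$. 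This produces the leading term $\frac{2\log t}{t}q$, the $2\log2\,q$ part of $F$, and a remainder contribution $O(t^{-2}\|q\|_{L^\infty})$ to $R_t$.

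For the second near integral, we compare $\langle tz\rangle^{-1}$ with $(t|z|)^{-1}$. We use
\[
0\le\frac{1}{t|z|}-\frac{1}{\langle tz\rangle}\le \frac{1}{t|z|}\min\Bigl(1,\frac{1}{t^2z^2}\Bigr),
\]
together with $|q(v-z)-q(v)|\le|z|\,\|\partial_vq\|_{L^\infty}$. The error is then bounded by
\[
\frac{\|\partial_v q\|_{L^\infty}}{t}\int_{|z|\le1}\min\bigl(1,(tz)^{-2}\bigr)\,dz\lesssim t^{-2}\|\partial_vq\|_{L^\infty}.
\]
After factoring out $1/t$, this is the $t^{-1}\|\partial_vq\|_{L^\infty}$ term in $R_t$. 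The main part is $\frac1t\int_{|z|\le1}\frac{q(v-z)-q(v)}{|z|}\,dz$, which is absolutely convergent with bound $2\|\partial_vq\|_{L^\infty}$.

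In the far region $|z|>1$ we have $0\le (t|z|)^{-1}-\langle tz\rangle^{-1}\le (t|z|)^{-3}$. Hence
\[
\int_{|z|>1}\frac{q(v-z)}{\langle tz\rangle}\,dz=\frac1t\int_{|z|>1}\frac{q(v-z)}{|z|}\,dz+O\bigl(t^{-3}\|q\|_{L^1}\bigr).
\]
The main term is bounded by $t^{-1}\|q\|_{L^1}$. The bound on $F[q]$ then follows by summing: $2\log2\|q\|_{L^\infty}+2\|\partial_vq\|_{L^\infty}+\|q\|_{L^1}$. Collecting all errors gives $R_t[q]$, defined as $t$ times the sum of the error pieces, with the stated bound; in fact it satisfies the stronger estimate $t^{-1}\|\partial_vq\|_{L^\infty}+t^{-2}(\|q\|_{L^1}+\|q\|_{L^\infty})$.

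For derivatives, every piece is a convolution of $q$ with a fixed kernel (for fixed $t$), or a multiple of $q$ itself. Concretely, $F$ and $R_t$ are translation invariant linear operators: $F[q]=2\log2\,q+\mathrm{p.v.}$-type kernel convolution, and $R_t$ is an explicit linear combination of such operators. They therefore commute with $\partial_v$ whenever $\partial_v^m q\in L^1\cap W^{1,\infty}$; one differentiates under the integral, justified by dominated convergence. Applying the first part to $\partial_v^mq$ gives the claim.

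The only delicate step is the near-diagonal kernel comparison. There one must extract exactly the $t^{-1}\|\partial_vq\|_{L^\infty}$ rate, rather than a $\log t/t$ loss, using the integrable factor $\min(1,(tz)^{-2})$.
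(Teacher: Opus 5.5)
Your proof is correct and follows the paper's argument essentially step by step. You split at $|z|=1$, subtract $q(v)$ near the diagonal using $\int_{|z|\le1}\langle tz\rangle^{-1}\,dz=\frac2t\operatorname{arsinh}(t)$, and compare the kernel with $(t|z|)^{-1}$: near the diagonal via the Lipschitz bound, where your $\min(1,(tz)^{-2})$ estimate is the paper's $\int_{|z|\le1}(1-|z|/\sqrt{z^2+t^{-2}})\,dz\lesssim t^{-1}$, and far away via the $|z|^{-3}$ tail against $\|q\|_{L^1}$, then differentiate under the integral for the $\partial_v^m$ version. The one slip is cosmetic: the ``stronger estimate'' you quote at the end is exactly the bound in the statement.
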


\begin{proof}
	We write
	\[
		(W_t \ast q)(v) 
		= 
		\frac{1}{t} \int \frac{q(v-z)}{\sqrt{z^2 + t^{-2}}} \, dz.
	\]
	For $|z| \le 1$, we write 
	\begin{align*}
		&\int_{|z|\le1} \frac{q(v-z)}{\sqrt{z^2 + t^{-2}}} \, dz\\
		&=
		q(v) \int_{|z|\le1} \frac{dz}{\sqrt{z^2 + t^{-2}}} 
		+
		\int_{|z|\le1} \frac{q(v-z)-q(v)}{\sqrt{z^2 + t^{-2}}} \, dz \\
		&=
		q(v) \int_{|z|\le1} \frac{dz}{\sqrt{z^2 + t^{-2}}} 
		+
		\int_{|z|\le1} (q(v-z)-q(v)) \left( \frac{1}{\sqrt{z^2 + t^{-2}}} - \frac{1}{|z|} \right) \, dz
		+
		\int_{|z|\le1} \frac{q(v-z)-q(v)}{|z|} \, dz.
	\end{align*}
	Observe that
	\[
		\int_{|z|\le1} \frac{dz}{\sqrt{z^2 + t^{-2}}} = 2\log t + 2\log 2 + O(t^{-2}),
	\]
	and
	\[
		\left| \int_{|z|\le1} (q(v-z)-q(v)) \left( \frac{1}{\sqrt{z^2 + t^{-2}}} - \frac{1}{|z|} \right)  dz \right|
		\lesssim
		\| \partial_v q \|_{L^{\infty}} \int_{|z|\le1} \left( 1- \frac{|z|}{\sqrt{z^2 + t^{-2}}} \right)  \, dz
		\lesssim t^{-1} \| \partial_v q \|_{L^{\infty}}.
	\]
	For $|z| \ge 1$, we write
	\[
		\int_{|z|>1} \frac{q(v-z)}{\sqrt{z^2 + t^{-2}}} \, dz
		=
		\int_{|z|>1} q(v-z) \left( \frac{1}{\sqrt{z^2 + t^{-2}}} - \frac{1}{|z|} \right) \, dz
		+
		\int_{|z|>1} \frac{q(v-z)}{|z|} \, dz.
	\]
	Noting that
	\[
		\left| \frac{1}{\sqrt{z^2 + t^{-2}}} - \frac{1}{|z|} \right| \lesssim t^{-2} |z|^{-3}
	\]
	for $|z|>1$, we obtain that
	\[
		\left| \int_{|z|>1} q(v-z) \left( \frac{1}{\sqrt{z^2 + t^{-2}}} - \frac{1}{|z|} \right) \, dz \right|
		\lesssim t^{-2} \| q \|_{L^1}.	
	\]
	Combining these estimates, we complete the proof of decomposition and bounds for $q \in L^1 \cap W^{1,\infty}$. In the case when $q \in W^{N,1} \cap W^{N+1, \infty}$, we repeat the argument for $\partial_v^m q$, $0 \le m \le N$, using differentiation under the integrals. On $|z| \le 1$, the difference quotient is controlled by $\| \partial^{m+1}_v q \|_{L^{\infty}}$. On $|z| >1$, we use $\partial_v^m q \in L^1$. The same argument applies to the remainder. This completes the proof of Lemma \ref{lem:decomposition-potential}.
\end{proof}

We recall from \eqref{est:at-ainfty-01} that
\[
    \|a(t) \|_{\lambda',s} + \|a_{\infty} \|_{\lambda',s} \lesssim \epsilon, \qquad 
    \| a(t) - a_{\infty} \|_{\lambda', s} \lesssim \epsilon t^{-1} \log^{4+4\delta} (1+t).
\]
For every integer $m \ge 0$, applying Leibniz rule, H\"older, and Sobolev embedding yields
\[
    \| \partial_v^m(f g) \|_{L^1 \cap W^{1,\infty}} \lesssim \| f \|_{\lambda',s} \| g \|_{\lambda',s}.
\]
Therefore, we get
\begin{equation}\label{est:a2-ainfty2}
    \| \partial_v^m (|a(t)|^2 - |a_{\infty}|^2)\|_{L^1 \cap W^{1,\infty}} 
    \lesssim \left( \|a(t)\|_{\lambda',s} + \| a_{\infty} \|_{\lambda',s} \right)
    \|a(t) - a_{\infty} \|_{\lambda',s}
    \lesssim \epsilon^2 t^{-1} \log^{4+4\delta} (1+t)
\end{equation}
and
\begin{equation}\label{est:a2}
    \| \partial_v^m |a(t)|^2\|_{L^1 \cap W^{1,\infty}} 
    \lesssim
    \|a(t)\|_{\lambda', s}^2
    \lesssim \epsilon^2.
\end{equation}

Applying Lemma \ref{lem:decomposition-potential}, we may write
\[
	\Phi(t,v) 
    =
    \frac{\mu}{2\pi} \int_1^t \left( \frac{2\log \tau}{\tau} |a(\tau,v)|^2 + \frac{1}{\tau} F[|a(\tau)|^2](v) + \frac{1}{\tau} R_\tau[|a(\tau)|^2](v) \right)  d\tau.
\]
We introduce
\[
	\Psi(t,v) 
    =\Phi(t,v) 
    - \frac{\mu}{2\pi}(\log t)^2 |a_{\infty}(v)|^2 
    - \frac{\mu}{2\pi} (\log t) \,   F[|a_{\infty}(v)|^2].
\]
We obtain for $t \ge 1$ that
\begin{align*}
	\partial_t \Psi(t)
	&=
	\frac{\mu}{2\pi} 
    \left(
    W_t \ast |a(t)|^2  - \frac{2\log t}{t} |a_{\infty}|^2 - \frac{1}{t} F[|a_{\infty}|^2]
    \right)\\
	&=
    \frac{\mu}{2\pi} 
    \left(
	\frac{2\log t}{t} \left( |a(t)|^2 - |a_{\infty}|^2 \right)
	+ \frac{1}{t} \left( F[|a(t)|^2-|a_{\infty}|^2] \right)
	+ \frac{1}{t} R_t[|a(t)|^2]
    \right).
\end{align*}
Moreover, for each integer $m\ge0$, we have
\begin{align*}
	\partial_t \partial_v^m \Psi(t)
	&=
	\frac{\mu}{2\pi} 
    \left(
    \frac{2\log t}{t} \partial_v^m \left( |a(t)|^2 - |a_{\infty}|^2 \right)
	+ \frac{1}{t} \left( F[\partial_v^m(|a(t)|^2-|a_{\infty}|^2)] \right)
	+ \frac{1}{t} R_t[\partial_v^m|a(t)|^2]
    \right).
\end{align*}
Using Lemma \ref{lem:decomposition-potential}, we bound
\begin{align*}
	\| \partial_t \partial_v^m \Psi(t) \|_{L^{\infty}}
	&\lesssim
	\frac{\log t}{t} \left\| \partial_v^m \left( |a(t)|^2 - |a_{\infty}|^2 \right) \right\|_{L^{\infty}}
	+ \frac{1}{t} \left(\left\| \partial_v^m ( |a(t)|^2 - |a_{\infty}|^2 )\right\|_{L^1} + \left\| \partial_v^m (|a(t)|^2 - |a_{\infty}|^2)\right\|_{W^{1,\infty}} \right)\\
	&\qquad + \frac{1}{t} \| R_t [\partial_v^m|a(t)|^2] \|_{L^{\infty}}\\
	&\lesssim
	\epsilon^2 t^{-2} \log^{5+4\delta}(1+t).
\end{align*}
For each integer $N \ge 0$, applying the preceding estimates for $0 \le m \le N$ shows that $\Psi(t)$ is Cauchy in $W^{N,\infty}$. Hence, there exists a real-valued function $\Psi_{\infty} \in \bigcap_{N\ge 0}W^{N, \infty}$ such that
\[
	\| \Psi(t) - \Psi_{\infty} \|_{W^{N,\infty}_v} \le C_N \epsilon^2 t^{-1} \log^{5+4\delta} (1+t)
\]
uniformly for $t \ge 1$. Recalling the definition of $\Psi(t)$, the estimate in particular gives us 
\[
	\left\| \Phi(t) - \frac{\mu}{2\pi}(\log t)^2 |a_{\infty}(v)|^2 - \frac{\mu}{2\pi}(\log t) F[|a_{\infty}(v)|^2] - \Psi_{\infty} \right\|_{L^{\infty}_v}
	\lesssim
	\epsilon^2 t^{-1}  \log^{5+4\delta} (1+t)
\]
uniformly for $t \ge 1$.
Moreover, since $\Psi(1) =0$, we have
\begin{equation}\label{est:Psiinfty}
    \| \Psi_{\infty} \|_{W^{N,\infty}} \le C_N \epsilon^2, \qquad N \ge 0.
\end{equation}

Denote $$r(t,v) = \Psi(t,v) - \Psi_{\infty}(v), \qquad \widetilde{\Phi}(t,v) = \frac{\mu}{2\pi}(\log t)^2 |a_{\infty}(v)|^2 +  \frac{\mu}{2\pi}(\log t)F[|a_{\infty}(v)|^2]$$ for notational convenience.
Observe that $r(t)$ is real-valued and we proved
\[
    \|r(t) \|_{L^{\infty}_v} \lesssim \epsilon^2 t^{-1} \log^{5+4\delta}(1+t).
\]
Define $\widetilde{a}_{\infty}(v) = e^{-i\Psi_{\infty}(v)} a_{\infty}(v)$. Since $a(t) = e^{i\Phi(t)} b(t)$, we have
\begin{equation}
\begin{aligned}
    e^{i\widetilde{\Phi}(t)} b(t) -\widetilde{a}_{\infty}
    &=
    e^{-i\Psi_{\infty}} 
    \left( e^{-ir(t)}a(t) - a_{\infty} \right) \\
    &=
    e^{-i\Psi_{\infty}}
    \left( e^{-ir(t)}(a(t)-a_{\infty}) + (e^{-ir(t)}-1) a_{\infty} \right).
\end{aligned}
\end{equation}
Since $s>1/2$, Sobolev embedding and interpolation give
\[
	\|f\|_{L_v^p}
	\lesssim
	\|f\|_{H_v^s}
	\lesssim
	\|f\|_{\lambda',s},
	\qquad 2\le p\le\infty.
\]
Therefore,
\[
	\|a(t)-a_\infty\|_{L_v^p}
	\lesssim
	\epsilon t^{-1}\log^{4+4\delta}(1+t),
	\qquad
	\|a_\infty\|_{L_v^p}\lesssim\epsilon.
\]
Noting that $\Psi_{\infty}$ and $r(t)$ are real-valued, and $|e^{-ir} -1| \le |r|$, we obtain that
\begin{equation}\label{est:modified-scattering}
\begin{aligned}
    \| e^{i\widetilde{\Phi}(t)} b(t) -\widetilde{a}_{\infty} \|_{L^p_v}
    \lesssim
    \| a(t) - a_{\infty} \|_{L^p_v} + \| r(t) \|_{L^{\infty}_v} \| a_{\infty} \|_{L^p_v}
    \lesssim \epsilon t^{-1} \log^{5+4\delta}(1+t).
\end{aligned}
\end{equation}
Since $|\widetilde{a}_{\infty}| = |a_{\infty}|$, we have
\[
    \widetilde{\Phi}(t,v) = \frac{\mu}{2\pi}(\log t)^2 |\widetilde{a}_{\infty}(v)|^2  + \frac{\mu}{2\pi} (\log t ) F[|\widetilde{a}_{\infty}|^2](v).
\]
Since $u(t) = M(t)D(t)b(t)$ and
\[
\|M(t)D(t)f\|_{L_x^p}
=(2\pi)^{-1/2}t^{-1/2+1/p}\|f\|_{L_v^p},
\]
applying \eqref{est:modified-scattering} to
\(f=b(t)-e^{-i\widetilde\Phi(t)}\widetilde a_\infty\)
gives \eqref{eqn:asymptoticformula}.

It remains to prove \eqref{est:modifiedscattering}. Define $u_{\infty} = \mathcal{F}^{-1} \widetilde{a}_{\infty}$. We may write
\begin{equation}\label{eqn:ms}
u(t)-U(t)e^{-i\widetilde\Phi(t,-i\nabla)}u_\infty
=
M(t)D(t)(b(t)-e^{-i\widetilde{\Phi}(t)} \widetilde{a}_{\infty})
+
M(t)D(t)\mathcal{F}
(I-M(t))\mathcal{F}^{-1}e^{-i\widetilde{\Phi}(t)} \widetilde{a}_{\infty}.
\end{equation}
To bound the first term, we use \eqref{est:modified-scattering}. For $2 \le p \le \infty$, this gives
\begin{equation}\label{est:ms-2}
    \|M(t) D(t) (b(t) - e^{-i\widetilde{\Phi}(t)} \widetilde{a}_{\infty}) \|_{L^p_x}
    \lesssim
    t^{-1/2 + 1/p} \|b(t) - e^{-i\widetilde{\Phi}(t)} \widetilde{a}_{\infty} \|_{L^p_v}
    \lesssim
    \epsilon t^{-3/2 + 1/p} \log^{5+4\delta}(1+t).
\end{equation}
It remains to bound the second term. Hausdorff--Young yields
\begin{equation}\label{est:ms-3-1}
\begin{aligned}
    \| M(t)D(t)\mathcal{F} (I-M(t))\mathcal{F}^{-1}e^{-i\widetilde{\Phi}(t)} \widetilde{a}_{\infty}\|_{L^p_x}
    \lesssim t^{-1/2+1/p}\| (e^{ix^2/(2t)}-1) \mathcal{F}^{-1}e^{-i\widetilde{\Phi}(t)} \widetilde{a}_{\infty}\|_{L^{p'}}
\end{aligned}
\end{equation}
From \eqref{est:a2-ainfty2} and \eqref{est:a2}, we have
\[
    \| \partial_v^m |\widetilde{a}_{\infty}|^2 \|_{L^1 \cap W^{1,\infty}}
    =
    \| \partial_v^m |a_{\infty}|^2 \|_{L^1 \cap W^{1,\infty}}
    \lesssim \epsilon^2
\]
for every nonnegative integer $m$. Using this bound on the expression
\[
    \partial_v^m \widetilde{\Phi}(t) 
    =\frac{\mu}{2\pi}(\log t)^2 \partial_v^m |\widetilde{a}_{\infty}(v)|^2 
    + \frac{\mu}{2\pi} (\log t) F[\partial_v^m |\widetilde{a}_{\infty}|^2](v),
\]
we have
\begin{equation}\label{est:tildePhi-derivative}
    \|\partial_v^m \widetilde{\Phi}(t)\|_{L^{\infty}_v}
    \lesssim
    \epsilon^2 \log^2(1+t)
\end{equation}
for each fixed integer $m\ge 0$, with a constant depending on $m$.
On the other hand, using \eqref{est:Psiinfty}, $\| a_{\infty} \|_{\lambda', s} \lesssim \epsilon$, and $\widetilde{a}_{\infty} = e^{-i\Psi_{\infty}} a_{\infty}$, we get
\begin{equation}\label{est:tildeainfty}
    \| \widetilde{a}_\infty \|_{H^N}
    \le C_N \epsilon
\end{equation}
for $N \ge 0$.
Using \eqref{est:tildePhi-derivative}, \eqref{est:tildeainfty}, and Leibniz rule, we get
\[
    \left\|  e^{-i\widetilde{\Phi}(t)} \widetilde{a}_{\infty}\right\|_{H^2_v}
    \lesssim
    \epsilon \log^4 (1+t), \qquad
    \left\|  e^{-i\widetilde{\Phi}(t)} \widetilde{a}_{\infty} \right\|_{H^3_v}
    \lesssim
    \epsilon \log^6 (1+t).
\]
Interpolating these two bounds yields
\[
    \left\|  e^{-i\widetilde{\Phi}(t)} \widetilde{a}_{\infty}\right\|_{H^{5/2 + \eta}_v}
    \lesssim
    \epsilon \log^{5+2\eta} (1+t)
\]
for $0 < \eta \le 1/2$. For $ g= \mathcal{F}^{-1} (e^{-i\widetilde{\Phi}(t)} \widetilde{a}_{\infty})$, Cauchy--Schwarz gives
\[
    \| x^2 g \|_{L^1}
    \le \| \langle x \rangle^{-1/2-\eta} \|_{L^2}
    \| \langle x \rangle^{5/2 + \eta} g \|_{L^2},
    \qquad
    \| x^2 g \|_{L^2}
    \le
    \| \langle x \rangle^{5/2+\eta} g \|_{L^2}.
\]
Now we may bound for $1 \le p' \le 2$ that
\begin{equation}\label{est:ms-3-2}
\begin{aligned}
    \| (e^{-ix^2/(2t)} -1) \mathcal{F}^{-1} e^{-i\widetilde{\Phi}(t)} \widetilde{a}_{\infty} \|_{L^{p'}_x}
    &\lesssim
    t^{-1} \| x^2 \mathcal{F}^{-1} e^{-i \widetilde{\Phi}(t)} \widetilde{a}_{\infty} \|_{L^{p'}_x} \\
    &\lesssim
    t^{-1} \left\|  e^{-i\widetilde{\Phi}(t)} \widetilde{a}_{\infty}  \right\|_{H^{5/2+\eta}_v}
    \lesssim
    \epsilon t^{-1} \log^{5+2\eta} (1+t).
\end{aligned}
\end{equation}
For the fixed $\delta >0$, choose $0 < \eta \le \min \{ 1/2, 2\delta \}$. Combining \eqref{est:ms-2}, \eqref{est:ms-3-1}, and \eqref{est:ms-3-2}, we obtain from \eqref{eqn:ms} that
\[
    \left\| u(t) - e^{it\Delta/2}e^{-i\widetilde{\Phi}(t,-i\partial_x)} u_{\infty} \right\|_{L^p_x}
    \lesssim \epsilon t^{-3/2+1/p} \log^{5+4\delta}(1+t),
\]
for $2 \le p \le \infty$, which proves \eqref{est:modifiedscattering}. This completes the proof of Theorem \ref{thm:main}.

\bibliographystyle{abbrv}

\end{document}